\documentclass[11pt]{amsart}
\usepackage[T1]{fontenc}
\usepackage{newtxtext,newtxmath}
\usepackage[a4paper,textwidth=155mm,textheight=235mm]{geometry}
\usepackage{mathtools,microtype,flafter,needspace}
\usepackage{tikz}
\usetikzlibrary{arrows.meta,calc}
\usepackage{enumitem}
\usepackage[hidelinks]{hyperref}
\usepackage{bookmark}
\usepackage{etoolbox}
\hypersetup{pdftitle={From log 2 to pi/2: the sharp asymptotic inradius of polynomial lemniscates},pdfauthor={Yujian Geng and Dong Qiu}}
\newtheorem{theorem}{Theorem}[section]
\newtheorem{lemma}[theorem]{Lemma}
\newtheorem{proposition}[theorem]{Proposition}
\newtheorem{corollary}[theorem]{Corollary}
\theoremstyle{remark}

\numberwithin{equation}{section}
\setlist[enumerate]{itemsep=3pt,topsep=5pt}
\allowdisplaybreaks[1]
\newcommand{\C}{\mathbb C}\newcommand{\R}{\mathbb R}
\newcommand{\D}{\mathbb D}
\newcommand{\Pn}{\mathcal P_n}\newcommand{\Om}{\Omega}
\DeclareMathOperator{\arsinh}{arsinh}
\title[The sharp asymptotic inradius of polynomial lemniscates]{From $\log 2$ to $\pi/2$: the sharp asymptotic inradius of polynomial lemniscates}
\author{Yujian Geng}
\author[Dong Qiu]{Dong Qiu\textsuperscript{*}}
\makeatletter
\patchcmd{\@setauthors}{\endtrivlist}{%
  \par\nobreak\vspace{4pt}%
  \normalfont\footnotesize\itshape
  School of Mathematics \& Center for Applied Mathematics of Guangxi (Guangxi University),\\
  Guangxi University, Nanning, Guangxi, 530004, P.\ R.\ China\par
  \endtrivlist
}{}{\PackageError{paper}{Unable to place the author affiliation}{Check the author title block.}}
\makeatother
\thanks{\textsuperscript{*}Corresponding author: Dong Qiu. E-mail: \href{mailto:qiudong@gxu.edu.cn}{\texttt{qiudong@gxu.edu.cn}}.}
\date{}
\subjclass[2020]{Primary 30C10; Secondary 30C80, 68V20}
\keywords{Polynomial lemniscates, inradius, normal families, Harnack inequality, entire functions, formal verification}
\begin{document}
\begin{abstract}
Let $R_n$ be the infimum of the inradii of $\{z:|p(z)|<1\}$ over monic degree-$n$ polynomials whose zeros lie in the closed unit disk. We prove $nR_n\to\pi/2$, matching the asymptotic obstruction supplied by $z^n-1$. We first establish the exact universal radius $2^{1/n}-1$ for disks centered at zeros, which recovers the $(\log2)/n$ bound. Small inradius then forces radial concentration of the zeros and decay of their low reciprocal moments. These estimates give an entire limit with a modulus reflection identity; a second rescaling produces an exponential tangent and strict sublevel disks of every radius below $\pi/2$. The proof is accompanied by a Lean~4 formalization and a step-by-step source index.
\end{abstract}
\maketitle

\noindent\textbf{Formal verification.}
The main theorem and its supporting formal proof have been verified in Lean~4. The mathematical conventions, source correspondence, versions, and reproduction procedure are specified in the Declarations at the end of the paper. The accompanying step index records the relation between the written argument and the formal proof.

\section{Introduction}
For $n\ge1$, let
\[
 \Pn=\left\{p(z)=\prod_{j=1}^n(z-z_j):z_j\in\overline\D\right\},
 \qquad \D=\{z\in\C:|z|<1\}.
\]
Zeros are counted with multiplicity. For $p\in\Pn$, define
\begin{align}
 \Om_p&=\{z\in\C:|p(z)|<1\},\notag\\
 \rho(p)&=\sup\{r>0:\exists a\in\C,\ B(a,r)\subset\Om_p\},
 \qquad R_n=\inf_{p\in\Pn}\rho(p).
 \label{eq:definitions}
\end{align}
Here $B(a,r)$ denotes the open Euclidean disk with center $a$ and radius $r$. The center is unrestricted. We use the strict sublevel set throughout and do not assume that either extremum in~\eqref{eq:definitions} is attained.

\emph{Historical context and attribution.}
Erd\H{o}s, Herzog, and Piranian posed the asymptotic inradius problem in 1958~\cite[Problem~3, p.~134]{EHP}. In the same passage, they used $p(z)=z^n-1$ to identify $\pi/2$ as an upper obstruction for the coefficient of the inverse-degree scale. Thus the candidate numerical constant and its model example precede the present work. Their question should be distinguished from an assertion that this coefficient is attained: the matching lower estimate is the content of Theorem~\ref{thm:main}. Later estimates were obtained by Pommerenke~\cite{Pom} and by Krishnapur, Lundberg, and Ramachandran~\cite{KLR}; the latter also discuss the roots-of-unity configuration as the conjectural extremal model.

The more recent inverse-degree lower bound was announced by Price in the public discussion of Problem~1039~\cite{Price,EP}. Sothanaphan explained the product argument giving the coefficient $\log2$~\cite{Sothanaphan}, and Bloom stated the matched-product inequality used below~\cite{Bloom}. Kitamura supplied a public Lean formalization of the baseline argument~\cite{Kitamura}. These public contributions are cited in their original form; they are not treated as journal publications. We give the product proof in Section~\ref{sec:baseline} and develop the asymptotic lower bound in Sections~\ref{sec:radial}--\ref{sec:conclusion}.

\Needspace{7\baselineskip}
Our main result determines the sharp asymptotic coefficient.
\begin{theorem}\label{thm:main}
For the extremal inradius defined in~\eqref{eq:definitions},
\begin{equation}\label{eq:main}
 \lim_{n\to\infty}nR_n=\frac\pi2.
\end{equation}
More precisely, for every $\varepsilon>0$ there is $N_\varepsilon$ such that
\[
 \rho(p)\ge\frac{\pi/2-\varepsilon}{n}
 \qquad(n\ge N_\varepsilon,\ p\in\Pn).
\]
The coefficient $\pi/2$ is optimal.
\end{theorem}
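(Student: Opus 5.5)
The upper bound comes from the model $p(z)=z^n-1$. Its zeros lie on the unit circle, and the inradius of $\{|z^n-1|<1\}$ is attained near the zeros $\omega$. Write $z=\omega(1+w/n)$. Then
\[
z^n-1=(1+w/n)^n-1\to e^w-1
\]
locally uniformly. The largest disk inside $\{|e^w-1|<1\}$ has radius $\pi/2$: the boundary points $w=\pm i\pi/2$, where $|e^w-1|=\sqrt2$, show the disk $B(0,r)$ must have $r\le \pi/2$, up to the optimal choice of centre. A direct computation for finite $n$ then gives $nR_n\le n\rho(z^n-1)\to\pi/2$, which yields $\limsup nR_n\le\pi/2$. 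This step is routine.

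For the lower bound I argue by contradiction. Suppose there are $\varepsilon>0$, $n_k\to\infty$ and $p_k\in\mathcal P_{n_k}$ with $n_k\rho(p_k)\le\pi/2-\varepsilon$. The plan has four steps.

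\textbf{Step 1 (baseline).} The product argument shows that the disk $B(z_j,2^{1/n}-1)$ centred at any zero lies in $\Om_p$. Write $q(u)=p(z_j+u)/u=\prod_{i\ne j}(u+z_j-z_i)$ and match the factors with $|z_j-z_i|\le 2$. This gives a fixed-size disk of scale $(\log 2)/n$ at every zero, hence a scale on which to rescale.

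\textbf{Step 2 (radial concentration).} Small inradius forces most zeros to lie within $o(1)$, and in fact within $O(1/n)$ in the relevant weighted sense, of the unit circle. Zeros deep inside would make $|p|$ small on a large region; here I would use subharmonicity of $\log|p|$ together with Jensen-type averaging. In addition, the low reciprocal moments $\frac1n\sum z_j^{-m}$ must decay for fixed $m$. If they did not, $\log|p|$ would not oscillate uniformly along the circle, and a wide region of smallness would appear.

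\textbf{Step 3 (first limit).} Rescale at a zero $z_0$, rotated to $1$, by setting $f_k(w)=p_k(z_0(1+w/n_k))/p_k'(z_0)\cdot(\text{normalisation})$, or better $\log|p_k|$ shifted by $\log|p_k(z_0)|$. Step 2 and Harnack's inequality make the family normal. The limit is an entire function $F$ of exponential type with $F(0)=0$ and $\{|F|<1\}$ containing no disk of radius $>\pi/2-\varepsilon$. Radial concentration and moment decay give a reflection identity $|F(w)|\,|F(-\bar w)|$-type, i.e.\ $\log|F|$ is governed by $\operatorname{Re} w$ plus a term symmetric about the imaginary axis.

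\textbf{Step 4 (second rescaling).} Blow down or translate along the imaginary axis to extract a further limit. The reflection identity and exponential type force this limit to be $G(w)=e^{w}-c$ with $|c|\le1$, or a degenerate case. For this $G$ the sublevel set $\{|G|<1\}$ contains strict disks of every radius below $\pi/2$, by the explicit computation from the upper-bound step, which is extremal at $|c|=1$. Hurwitz-type stability of strict sublevel sets under locally uniform convergence then transfers such a disk back to $p_k$ for large $k$, contradicting $n_k\rho(p_k)\le\pi/2-\varepsilon$.

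The main obstacle is Steps 2--4: proving that small inradius forces the limit to be exactly an exponential, using the moment decay and the reflection identity, rather than some other entire function with thin sublevel sets. I would first try to show that $\log|F(w)|-\operatorname{Re}w$ is harmonic and bounded on half-planes, via the Harnack bounds, and then apply Liouville.
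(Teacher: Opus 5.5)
Your outline has the same architecture as the paper's proof: radial concentration and moment decay, a first rescaling at scale $1/n$ to an entire limit with a reflection identity, a second rescaling to an exponential, and transfer of disks. However, the estimates that carry it are either too weak or missing. In Step~2, ``most zeros within $O(1/n)$'' and ``$\frac1n\sum_j z_j^{-m}\to0$ for fixed $m$'' cannot support Step~3. The exact reflection $|F(x+iy)|=e^x|F(-x+iy)|$ needs $n\log|z_j|\to0$ for the zeros within $O(1/n)$ of $\xi_n$. A zero at $\xi_ne^{-1/n}$ gives $F(-1)=0$, reflection then gives $F(1)=0$, and Hurwitz would produce zeros of $p_n$ near $\xi_ne^{1/n}$, outside $\overline\D$. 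In the paper, local boundedness of the rescaled family and the uniform left asymptotics $|F(-a+iy)-1|\le K_0e^{-\delta a}$ (which later make the tangent exponent positive) come from bounding $G(z)=\sum_kA_kz^k/k$ on the circles $|z|=e^{-a/n}$. There $|z|^k$ gives no damping for $k$ up to about $n/a$, so you need the \emph{unnormalized} moments $A_k=\sum_jz_j^{-k}$ to be small for all $k$ up to order $n$. The missing idea is the iterated local Harnack contraction. Every disk of radius slightly above $q=C/n$ meets $\{|p|\ge1\}$, so Harnack's inequality for $\log|p|+M(b)$ on disks of radius $3q$ halves $M(s)=\max_{|z|\le s}(-\log|p(z)|)$ at each inward step of length $3q$. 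About $n/(4C)$ steps give $\sum_j(1-|z_j|)\le16C\,2^{-\lfloor n/(4C)\rfloor}$. Borel--Carath\'eodory and Cauchy then give $|A_k|\le96Ck(2/3)^K$ for $k\le K=\lfloor n/(24C)\rfloor$. Finally, the bound $\Re\bigl(zp'/(np)\bigr)\le\frac12$ with a high-order Schwarz lemma carries this to distance $O(1/n)$ from the circle. Your normalization must also be unimodular in the limit, or $\{|F|<1\}$ is not the limit of $\{|p_k|<1\}$. Dividing by $p_k'(z_0)$ does not ensure this, and $\log|p_k(z_0)|=-\infty$ at a zero. The paper multiplies by $\omega_n=|p_n(0)|/p_n(0)$ and uses $|p_n(0)|\to1$.

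In Step~4, reflection and exponential type do \emph{not} force $e^w-c$. For real $a\ne0$, $F(w)=1+ae^{w/4}-ae^{3w/4}-e^w$ is bounded on vertical strips, has $F(0)=0$ and $|F-1|=O(e^{x/4})$ as $x\to-\infty$, and satisfies $|F(x+iy)|=e^x|F(-x+iy)|$. It is $8\pi i$-periodic, so its imaginary translates never approach $e^w-c$, and the dilates $F(Rw)$, $R\to\infty$, are not locally bounded. The Liouville plan fails as well: $\log|F|$ is singular at the zeros of $F$, and $\log|F|-\Re w\to+\infty$ on the left. The paper instead translates far to the \emph{left} and normalizes $F-1$ by $m(x)=\sup_y|F(x+iy)-1|\to0$. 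Hadamard's three-lines theorem makes $\log m$ convex. The left asymptotics give $\log m(x)\le\delta x+O(1)$ as $x\to-\infty$, and reflection gives $\log m(x)\le x+O(1)$ as $x\to+\infty$. So the left slope $\lambda$ lies in $[\delta,1]$, and the normalized translates converge to $ce^{\lambda w}$ with $|c|=1$; in the example, $\lambda=1/4$. The sharp constant comes from $\lambda\le1$, which gives the strict margin $\Re\bigl(ce^{\lambda(i\tau+z)}\bigr)\le-e^{-\lambda r}\cos(\lambda r)<0$ on $|z|\le r<\pi/2$. The disk reaches $F$ through $|1+m_jH_j|^2\le1-\eta_rm_j+B_r^2m_j^2<1$, not through Hurwitz stability of a sublevel set. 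Two smaller points. First, Step~1 is false as stated: only \emph{some} zero carries that disk. For $(z-1)^{n-1}(z+1)$, the largest disk about $-1$ inside $\Om_p$ has radius about $2^{1-n}$, which is also all your factor matching yields; the lower bound does not use Step~1 anyway. Second, for the upper bound, the limit $e^w-1$ alone does not bound $\rho(z^n-1)$, because near-maximal disks lie at distance $\gg1/n$ from the circle, outside every fixed compact set of the $w$-plane. You need the finite-$n$ statement that each lobe, rotated to the principal one, lies in the strip $|\Im z|<H_n$ with $nH_n\to\pi/2$. The relevant fact for $e^w-1$ is that each component of $\{|e^w-1|<1\}$ lies in a horizontal strip of width $\pi$; the points $\pm i\pi/2$ are not on its boundary.
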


The coefficient $\pi/2$ is optimal in the statement with an arbitrary loss $\varepsilon>0$ and a degree threshold depending on that loss. This is an asymptotic assertion: it does not assert $R_n\ge\pi/(2n)$ at every finite degree; indeed, $R_1=1$.

The proof begins with a finite-product argument. If every disk centered at a zero fails to lie in $\Om_p$, one selects a point with $|p|\ge1$ from each disk and estimates the product of the resulting values. This gives the exact universal radius
\[
 T_n=2^{1/n}-1,\qquad nT_n\longrightarrow\log2,
\]
when the center must be a zero. Indeed, put $x=1/n$ and use
\[
 \frac{e^{(\log2)x}-1}{x}\longrightarrow\log2
 \qquad(x\downarrow0),
\]
which follows from the derivative of the exponential at zero. To determine the unrestricted asymptotic inradius, we instead study sequences satisfying $\rho(p_n)\le C/n$ for a fixed $C>0$.

The small-inradius assumption ensures that every slightly larger disk contains a point where $|p_n|\ge1$. Local Harnack contraction turns these points into exponential bounds on the radial defect and the low reciprocal moments. The normalized logarithmic derivative has a uniform real-part bound. After its small low-order terms have been removed, a high-order Schwarz estimate controls the remaining analytic logarithm on circles at distance $O(1/n)$ from the unit circle.

Two rescalings complete the argument. The first, in the coordinate $z=\xi_ne^{w/n}$, yields an entire function with uniform left asymptotics and an exact modulus reflection identity. The second translates and normalizes this function far to the left. Three-lines convexity identifies a pure exponential tangent with exponent at most $1$. The positivity of $\cos y$ on $|y|<\pi/2$ then supplies the disks needed for the sharp lower bound.

\section{The product bound and disks centered at zeros}\label{sec:baseline}
We follow the kernel-product method in the public baseline proof~\cite{Sothanaphan,Kitamura} to establish the matched-product inequality stated by Bloom~\cite{Bloom}. We include the details both for completeness and to isolate the precise restriction imposed by fixing the center at a zero.

\begin{lemma}[Kernel product]\label{lem:kernel}
If $|u_j|=1$ for $1\le j\le n$ and $0\le a<1$, then
\begin{equation}\label{eq:kernel}
 \prod_{i,j=1}^n|1-a\overline{u_i}u_j|\le(1-a^n)^n.
\end{equation}
\end{lemma}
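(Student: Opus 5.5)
The plan is to pass to logarithms and Fourier series, and then prove the resulting inequality by a Delsarte-type linear-programming bound. Two facts guide it. The extremal configuration should be the $n$th roots of unity: for $u_j=\omega^j$ one has $\prod_j|1-a\overline{u_i}u_j|=|1-a^n|$ for each $i$, which gives equality in~\eqref{eq:kernel}. And $n=2$ already shows the estimate is a maximization over configurations, not a termwise bound.

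First I reduce to an energy inequality. Write $u_j=e^{i\theta_j}$ and
\[
 \varphi(\theta)=-\log|1-ae^{i\theta}|=\sum_{k\ge1}\frac{a^k}{k}\cos k\theta .
\]
Then~\eqref{eq:kernel} is equivalent to
\[
 E:=\sum_{i,j}\varphi(\theta_j-\theta_i)\ \ge\ -n\log(1-a^n)=n\sum_{m\ge1}\frac{a^{nm}}{m}.
\]
Expanding with $S_k=\sum_j u_j^k$ gives $E=\sum_{k\ge1}\frac{a^k}{k}|S_k|^2$. At the roots of unity $S_k=n$ when $n\mid k$ and $S_k=0$ otherwise, so $E$ equals the right-hand side there. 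The task is therefore to show that the roots of unity minimize this positive-definite energy.

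A quadratic-form argument on the full double sum cannot work by itself. Any minorant $h\le\varphi$ with nonnegative nonconstant Fourier coefficients has mean $h_0\le0$, because $\varphi$ has mean zero. So I split off the diagonal, $E=n\varphi(0)+\sum_{i\ne j}\varphi(\theta_j-\theta_i)$, with $\varphi(0)=-\log(1-a)$, and apply the linear-programming bound only off the diagonal. I look for a trigonometric polynomial $h(\theta)=h_0+\sum_{k\ge1}h_k\cos k\theta$ with three properties: $h_k\ge0$ for $k\ge1$; $h\le\varphi$ on $(0,2\pi)$; and $h(2\pi r/n)=\varphi(2\pi r/n)$ for $1\le r\le n-1$, which is what equality at the roots of unity forces. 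Then
\[
 \sum_{i\ne j}\varphi(\theta_j-\theta_i)\ \ge\ \sum_{i,j}h(\theta_j-\theta_i)-nh(0)\ \ge\ n^2h_0-nh(0).
\]
It then remains to check the identity $n\varphi(0)+n^2h_0-nh(0)=-n\log(1-a^n)$. Since both sides are exact at the roots of unity, this should amount to the periodization identity
\[
 \sum_{r=0}^{n-1}\varphi(\theta+2\pi r/n)=-\log|1-a^ne^{in\theta}|.
\]

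The main obstacle is constructing the interpolating minorant $h$. My first attempt is Hermite interpolation of $\varphi$ at the nonzero $n$th roots of unity, matching both value and derivative there, by a cosine polynomial of degree $<n$. I will try to prove $h\le\varphi$ from the absolute monotonicity of $\varphi$: its coefficients $a^k/k$ are positive, decreasing, and log-convex in $k$. The nonnegativity of the $h_k$ is the step I would check first, for instance by writing $h$ as an average of Fej\'er-type kernels.

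If this becomes unwieldy, the fallback is a direct product argument. Put $q(z)=\prod_j(z-u_j)$. For fixed $i$, $\prod_j|1-a\overline{u_i}u_j|=a^n|q(u_i/a)|$, so the left side of~\eqref{eq:kernel} equals $a^{n^2}\prod_i|q(u_i/a)|$. I would then bound $\prod_i|q(u_i/a)|$ by comparing $\log|q|$ on the circle $|z|=1/a$ with its value at the roots of unity, using subharmonicity of $\log|q|$ and the fact that $z^{-n}q(z)$ is analytic and zero-free in $|z|>1$.
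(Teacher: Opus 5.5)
\textbf{There is a genuine gap: the dual certificate $h$ is never constructed, and its existence is the whole content of the lemma.}

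Your reduction is correct as far as it goes. The lemma is $E\ge-n\log(1-a^n)$ with $E=\sum_{k\ge1}\frac{a^k}{k}|S_k|^2$. For any cosine polynomial $h$ of degree at most $n-1$ interpolating $\varphi$ at $2\pi r/n$ $(1\le r\le n-1)$, discrete orthogonality gives $nh_0-h(0)=\sum_{r=1}^{n-1}\varphi(2\pi r/n)$, so the constant does come out right. The missing pieces are these.

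\emph{The minorant condition $h\le\varphi$.} The positivity, monotonicity and log-convexity of the coefficients $a^k/k$ do not control the sign of $\varphi-h$. You need to interpolate in $t=\cos\theta$, where $h$ is an algebraic polynomial, and use that $g(t)=-\tfrac12\log(1+a^2-2at)$ has all derivatives positive on $[-1,1]$. Then the Hermite remainder $g^{(N)}(\xi)\prod_r(t-t_r)^{\mu_r}/N!$ is nonnegative on the closed interval; here $\mu_r=2$, except $\mu_r=1$ at $t_r=-1$ for even $n$. You need this at $t=1$ too, because the $u_j$ may coincide, so your condition on $(0,2\pi)$ is too weak.

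\emph{The positivity $h_k\ge0$, which is the decisive step.} This is exactly the positivity step in Cohn and Kumar's linear-programming proof that the regular $n$-gon is universally optimal. One writes $h$ in Newton form, whose divided differences are positive. One must then show that, for a suitable ordering of the nodes $\cos(2\pi r/n)$, every Newton partial product has nonnegative Chebyshev coefficients. The ordering matters: for $n=5$, starting at $\cos(2\pi/5)$ gives the partial product $t-\cos(2\pi/5)$, whose constant coefficient is negative. ``An average of Fej\'er-type kernels'' does not address this.

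\emph{The fallback.} It restates the lemma rather than reducing it. The inequality $\prod_i|q(u_i/a)|\le(a^{-n}-1)^n$ is exactly the lemma; it is Proposition~\ref{prop:matched} with zeros on the circle and $w_i=u_i/a$. Jensen's formula and subharmonicity only give the mean $n\log(1/a)$ of $\log|q|$ on $|z|=1/a$. You need the strictly smaller average $n\log(1/a)+\log(1-a^n)$ at the particular points $u_i/a$.

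\textbf{How the paper avoids a certificate.} It differentiates in $a$. In your notation, $a\,\frac{dE}{da}=\sum_{k\ge1}a^k|S_k|^2$ and $a\frac{d}{da}[-n\log(1-a^n)]=\frac{n^2a^n}{1-a^n}$, and both sides vanish at $a=0$. So it suffices to prove $\sum_{k\ge0}a^k|S_k|^2\ge n^2/(1-a^n)$, that is, $\sum_{i,j}(1-\overline{v_i}v_j)^{-1}\ge n^2/(1-\prod_j|v_j|^2)$ with $v_j=\sqrt a\,u_j$. The paper gets this from a single Cauchy--Schwarz inequality in $\ell^2$. It pairs $(\sum_jv_j^m)_{m\ge0}$ with $(\overline{h_m})$, where $h_m=\mathbf 1_{m=0}-\bar\lambda b_m$, the $b_m$ are the Taylor coefficients of the Blaschke product $B$ with zeros $v_j$, and $\lambda=B(0)$. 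The pairing equals $n$ because $B(v_j)=0$. The squared norm is $1-|\lambda|^2=1-a^n$ because $\sum_m|b_m|^2=1$. This dual vector is explicit for every configuration, so no interpolation or coefficient positivity needs checking.
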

\begin{proof}
First take $|v_j|<1$ and write
\[
 B(w)=\prod_j\frac{w-v_j}{1-\overline{v_j}w}
     =\sum_{m\ge0}b_mw^m,\qquad \lambda=B(0)=\prod_j(-v_j).
\]
We use the real Hilbert space $\ell^2(\mathbb N;\mathbb C)$, with inner product
$\langle u,v\rangle=\Re\sum_m\overline{u_m}v_m$.
Here $\sum_m|b_m|^2=1$. To verify this directly in coefficient space, multiplication by one Blaschke factor $(w-a)/(1-\bar a w)$ sends a square-summable sequence $u$ to a sequence $v$ satisfying, with $d=1-|a|^2$,
\[
 t_0=u_0,\quad t_{m+1}=u_{m+1}+\bar a t_m,
 \qquad v_0=-au_0,\quad v_{m+1}=-au_{m+1}+dt_m.
\]
Expansion and telescoping give
\[
 \sum_{m=0}^N|v_m|^2+d|t_N|^2=\sum_{m=0}^N|u_m|^2.
\]
The last term tends to zero: for a finitely supported sequence this follows from $|a|<1$, and the same identity bounds the error after truncating an arbitrary square-summable sequence. Thus multiplication by one factor preserves the squared norm. Iterating from the constant sequence $(1,0,\ldots)$ proves the assertion about $b$; the evaluated series represents $B$ by the Cauchy product formula.

Set $h_m=\mathbf1_{m=0}-\bar\lambda b_m$, $S_m=\sum_jv_j^m$, and $H_m=\overline{h_m}$. These sequences belong to $\ell^2$. The geometric series, evaluation at each $v_j$, and $B(v_j)=0$ give
\[
 \langle S,H\rangle=n,\qquad
 \|H\|_2^2=1-|\lambda|^2,\qquad
 \|S\|_2^2=\sum_{i,j}\frac1{1-\overline{v_i}v_j}.
\]
For the middle identity, expand $\sum|h_m|^2$, using $b_0=\lambda$ and $\sum|b_m|^2=1$. The sum in the last identity is real by conjugate symmetry. Real Hilbert-space Cauchy--Schwarz now gives
\begin{equation}\label{eq:kernel-sum}
 \sum_{i,j}\frac1{1-\overline{v_i}v_j}
 \ge\frac{n^2}{1-\prod_j|v_j|^2}.
\end{equation}
Repeated points cause no difficulty.

Define $L(a)=\sum_{i,j}\log|1-a\overline{u_i}u_j|$. All factors are nonzero. Differentiating for $a>0$ and pairing conjugate terms gives
\[
 aL'(a)=n^2-\sum_{i,j}\frac1{1-a\overline{u_i}u_j}.
\]
Apply~\eqref{eq:kernel-sum} with $v_j=\sqrt a\,u_j$ to obtain
\[
 L'(a)\le-\frac{n^2a^{n-1}}{1-a^n}
 =\frac d{da}\bigl[n\log(1-a^n)\bigr].
\]
The difference $L(a)-n\log(1-a^n)$ is therefore nonincreasing and has value zero at $a=0$. Exponentiation proves the claim; the endpoint $a=0$ is immediate.
\end{proof}

\begin{lemma}[Regularized separation product]\label{lem:separation}
Suppose that $|u_j|\le1$, $\eta>0$, and $\gamma=\arsinh(\eta/2)$. Then
\begin{equation}\label{eq:separation}
 \prod_{i<j}(|u_i-u_j|^2+\eta^2)
 \le\left(\frac{\sinh(n\gamma)}{\sinh\gamma}\right)^n.
\end{equation}
\end{lemma}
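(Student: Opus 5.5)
The plan is to deduce \eqref{eq:separation} from Lemma~\ref{lem:kernel}. First we treat points on the unit circle, using an exact identity that relates $|u_i-u_j|^2+\eta^2$ to the kernel factors. Then we reduce the general case $|u_j|\le1$ to the circle by subharmonicity in each variable separately.

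\emph{Step 1 (choice of $a$).} Put $a=e^{-2\gamma}\in(0,1)$. Then
\[
 \frac{1-a}{\sqrt a}=e^{\gamma}-e^{-\gamma}=2\sinh\gamma=\eta .
\]
If $|u_i|=|u_j|=1$, expanding the modulus gives
\[
 |1-a\overline{u_i}u_j|^2=(1-a)^2+a|u_i-u_j|^2=a\bigl(|u_i-u_j|^2+\eta^2\bigr).
\]
\emph{Step 2 (the circle case).} In the product of Lemma~\ref{lem:kernel}, the $n$ diagonal factors each equal $1-a$. The off-diagonal factors pair up by conjugate symmetry, since $|1-a\overline{u_i}u_j|=|1-a\overline{u_j}u_i|$. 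Hence
\[
 (1-a)^n\prod_{i<j}a\bigl(|u_i-u_j|^2+\eta^2\bigr)\le(1-a^n)^n .
\]
Dividing by $(1-a)^n a^{n(n-1)/2}$ and substituting $a=e^{-2\gamma}$, we get
\[
 \frac{1-a^n}{(1-a)\,a^{(n-1)/2}}
 =\frac{e^{n\gamma}-e^{-n\gamma}}{e^{\gamma}-e^{-\gamma}}
 =\frac{\sinh(n\gamma)}{\sinh\gamma},
\]
which is exactly \eqref{eq:separation} when all $|u_j|=1$.

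\emph{Step 3 (reduction to the circle).} Fix all points except $u_k$. The function
\[
 u\mapsto\log\bigl(|u-u_j|^2+\eta^2\bigr)
\]
is the logarithm of $|f_1|^2+|f_2|^2$, where $f_1(u)=u-u_j$ and $f_2\equiv\eta$ are holomorphic. Such a logarithm is subharmonic; it is also continuous, because $\eta>0$ keeps the argument positive. So the logarithm of the left side of \eqref{eq:separation} is subharmonic and continuous in $u_k$ on $\overline\D$. By the maximum principle, replacing $u_k$ by a suitable point of the unit circle does not decrease the product. We do this for $k=1,\dots,n$ in turn; at each stage the points already moved stay fixed. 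This yields a configuration on the circle whose product is at least the original one, and Step~2 finishes the proof. Repeated points cause no difficulty anywhere.

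The main thing to get right is Step~3. The separation product is not monotone under simply pushing points radially outward, so we must really use the maximum principle one variable at a time. The regularization $\eta>0$ matters here: it keeps the logarithm finite, so the argument needs no limiting procedure. The other steps are algebra, and the only point to double-check is the bookkeeping of diagonal versus off-diagonal factors in Step~2.
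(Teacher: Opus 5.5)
Your proof is correct, and it differs from the paper's in which product gets pushed to the unit circle.

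\textbf{The paper's route.} It first extends the kernel bound to the closed disk. With the other variables fixed, the off-diagonal factors involving $u_k$ combine to $\bigl|\prod_{j\ne k}(1-a\overline{u_j}u_k)\bigr|^2$, the squared modulus of a polynomial in $u_k$. The ordinary maximum modulus principle then gives
\[
\prod_{i\ne j}|1-a\overline{u_i}u_j|\le\bigl((1-a^n)/(1-a)\bigr)^n \qquad (|u_j|\le1).
\]
The paper then compares factors inside the disk, using
\[
\lambda^2|1-a\overline{u_i}u_j|^2-(|u_i-u_j|^2+\eta^2)=(1-a)(D_i+D_j)+aD_iD_j\ge0,
\]
where $\lambda=e^\gamma$, $a=\lambda^{-2}$ and $D_i=1-|u_i|^2$.

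\textbf{Your route.} You use only the boundary case $D_i=D_j=0$ of this identity, where it is an equality. You then move the points of the separation product itself. This costs you the subharmonicity of $\log(|u-c|^2+\eta^2)$ in place of maximum modulus for a holomorphic polynomial. That fact is a one-line check: $\Delta\log(|u-c|^2+\eta^2)=4\eta^2/(|u-c|^2+\eta^2)^2>0$.

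\textbf{Trade-off.} The paper's order yields two by-products: the closed-disk kernel bound and an interior factor comparison. Your order avoids verifying the defect identity with the $D_i$. Nothing downstream is lost. The ``algebraic form'' of the separation estimate used in Proposition~\ref{prop:matched} follows from your statement by the scaling $u_i=y_i/A_0$ with $\gamma=\log\lambda$, since $\lambda^{n-1}(1-a^n)/(1-a)=\sinh(n\gamma)/\sinh\gamma$.
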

\begin{proof}
The maximum modulus principle and Lemma~\ref{lem:kernel} imply
\begin{equation}\label{eq:offdiag}
 \prod_{i\ne j}|1-a\overline{u_i}u_j|
 \le\left(\frac{1-a^n}{1-a}\right)^n\qquad(0\le a<1).
\end{equation}
Indeed, with the other variables fixed, the factors involving $u_k$ have product
$|\prod_{j\ne k}(1-a\overline{u_j}u_k)|^2$.
Moving the variables successively to the unit circle cannot decrease the product. On the circle the diagonal factors have product $(1-a)^n$, so~\eqref{eq:offdiag} follows by dividing~\eqref{eq:kernel} by this quantity.

Set $\lambda=e^\gamma$ and $a=\lambda^{-2}$, so that $\eta=\lambda-\lambda^{-1}$. Writing $D_i=1-|u_i|^2$, direct expansion yields
\[
 \lambda^2|1-a\overline{u_i}u_j|^2
 -(|u_i-u_j|^2+\eta^2)
 =(1-a)(D_i+D_j)+aD_iD_j\ge0.
\]
Multiply over $i<j$ and apply~\eqref{eq:offdiag}. The identity
\[
 \lambda^{n-1}\frac{1-a^n}{1-a}
 =\frac{\sinh(n\gamma)}{\sinh\gamma}
\]
then gives~\eqref{eq:separation}.
\end{proof}

By scaling, if $|y_i|\le A$ and $A,\tau>0$, then
\begin{equation}\label{eq:scaled-separation}
 \prod_{i<j}(|y_i-y_j|^2+\tau^2)
 \le A^{n(n-1)}
 \left(\frac{\sinh(n\gamma)}{\sinh\gamma}\right)^n,
 \qquad \gamma=\arsinh\frac{\tau}{2A}.
\end{equation}

\begin{proposition}[Matched-product inequality]\label{prop:matched}
If $p(z)=\prod_i(z-z_i)\in\Pn$ and $|w_i-z_i|\le t$ for $t\ge0$, then
\begin{equation}\label{eq:matched}
 \prod_i|p(w_i)|\le\bigl((1+t)^n-1\bigr)^n.
\end{equation}
\end{proposition}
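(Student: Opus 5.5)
The plan is to split $\prod_i|p(w_i)|=\prod_{i,j}|w_i-z_j|$ into diagonal and off-diagonal factors, and to pair the off-diagonal ones symmetrically. The $n$ diagonal factors satisfy $|w_i-z_i|\le t$, giving $t^n$. For $i<j$, write $w_i=z_i+e_i$ with $|e_i|\le t$ and set $y_i=z_i+e_i/2$. Then
\[
 w_i-z_j=(y_i-y_j)+f_{ij},\qquad w_j-z_i=-(y_i-y_j)+f_{ij},\qquad f_{ij}=\tfrac12(e_i+e_j),
\]
so the paired factor is
\[
 |w_i-z_j|\,|w_j-z_i|=\bigl|f_{ij}^2-(y_i-y_j)^2\bigr|\le|y_i-y_j|^2+t^2 .
\]
The problem is thereby reduced to a regularized separation product of exactly the shape in Lemma~\ref{lem:separation} and~\eqref{eq:scaled-separation}, with $\tau=t$.

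Next I match constants. The target can be rewritten, using $\lambda^{n-1}(1-a^n)/(1-a)=\sinh(n\gamma)/\sinh\gamma$ from the proof of Lemma~\ref{lem:separation}. Put $a=(1+t)^{-1}$ and $\lambda=\sqrt{1+t}$, so that $2\sinh\gamma=\lambda-\lambda^{-1}=t/\sqrt{1+t}$. A direct computation then gives
\[
 \bigl((1+t)^n-1\bigr)^n=t^n\,(1+t)^{n(n-1)/2}\Bigl(\frac{\sinh(n\gamma)}{\sinh\gamma}\Bigr)^n .
\]
This is exactly $t^n$ times the right side of~\eqref{eq:scaled-separation} with $A=\sqrt{1+t}$ and $\tau=t$. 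So it would suffice to have points $y_i$ with $|y_i|\le\sqrt{1+t}$ and pair bounds $\le|y_i-y_j|^2+t^2$.

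The main obstacle is that the midpoints satisfy only $|y_i|\le1+t/2$, which exceeds $\sqrt{1+t}$. Plugging $A=1+t/2$ into~\eqref{eq:scaled-separation} loses already at $n=2$: it gives $t^2\bigl((2+t)^2+t^2\bigr)$ against the target $t^2(2+t)^2$. So I would not apply Lemma~\ref{lem:separation} as a black box. Instead I would reopen its proof and compare each pair factor directly with a kernel factor
\[
 (1+t)\,\bigl|1-a\overline{u_i}u_j\bigr|^2,\qquad u_i=\frac{z_i}{\max(1,|z_i|)}\ \text{or a suitable normalization of }z_i,\ w_i .
\]
The aim is an identity of the form ``kernel minus pair factor $=$ nonnegative terms in $1-|u_i|^2$, $t-|e_i|$''. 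This is the analogue of the identity $(1-a)(D_i+D_j)+aD_iD_j\ge0$ used in the proof of Lemma~\ref{lem:separation}, now using the extra slack $t^2-|f_{ij}|^2\ge0$ to absorb the excess of $|y_i|$ over $\sqrt{1+t}$.

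A second route for this step is the maximum modulus reduction already used for~\eqref{eq:offdiag}. With all other variables fixed, $\prod_i|p(w_i)|$ is subharmonic in each $e_k$ and in each $z_k$, so one may push every $e_k$ to $|e_k|=t$ and every $z_k$ to $|z_k|=1$. In that extreme configuration I would seek the exact factorization of the pair product against $(1+t)|1-a\overline{u_i}u_j|^2$ with $|u_i|=1$, and then conclude by~\eqref{eq:offdiag} with $a=1/(1+t)$. That final step yields $\prod_{i\ne j}\le\bigl(((1+t)^n-1)/t\bigr)^n$, and multiplying by the diagonal factor $t^n$ gives~\eqref{eq:matched}.
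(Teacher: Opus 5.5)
\textbf{There is a genuine gap.} Your midpoint identity is correct, and so is your diagnosis that it places the $y_i$ only in the disk of radius $1+t/2$, which is too large for \eqref{eq:scaled-separation}. But the proposal stops there. The ``kernel minus pair factor'' identity is only stated as an aim, and the extreme-configuration route ends with ``I would seek the exact factorization''. The maximum-modulus reduction to $|z_k|=1$, $|e_k|=t$ is valid, but it does not supply the missing step. Moreover, the comparison you propose cannot work as stated.

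\emph{The constant is off.} Write $J=(1-a^n)/(1-a)$. Bounding each pair factor by $(1+t)|1-a\overline{u_i}u_j|^2$ and applying \eqref{eq:offdiag} would give $\prod_i|p(w_i)|\le t^n(1+t)^{n(n-1)/2}J^n$. For $n\ge2$ and $t>0$ this is strictly smaller than $((1+t)^n-1)^n=t^n(1+t)^{n(n-1)}J^n$. Yet equality in \eqref{eq:matched} holds for $p=z^n-1$ with $w_i=(1+t)z_i$. The constant would have to be $(1+t)^2$.

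\emph{Even $(1+t)^2$ fails for $u_i=z_i$.} Take the extreme configuration $z_i=1$, $z_j=e^{i\theta}$, and set $s=|z_i-z_j|\in(0,2)$. Put $w_i=z_i+t(z_i-z_j)/s$ and $w_j=z_j-t(z_i-z_j)/s$. Then $|w_i-z_j|\,|w_j-z_i|=(s+t)^2$, whereas $|1+t-\overline{z_i}z_j|^2=s^2+t^2+ts^2$. The excess is $ts(2-s)>0$. You give no other choice of unit vectors $u_i$, and none is evident.

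\textbf{The missing idea is to keep the product structure rather than symmetrize through midpoints.} The identity $(w_i-z_j)(w_j-z_i)=(w_i-z_i)(w_j-z_j)-(z_i-z_j)(w_i-w_j)$ gives
\[
|w_i-z_j|\,|w_j-z_i|\le XY+t^2,\qquad X=|z_i-z_j|,\ Y=|w_i-w_j|.
\]
Cauchy--Schwarz with inversion-paired weights,
\[
(XY+t^2)^2\le(X^2+t^2/A)(Y^2+t^2A),\qquad A=1+t,
\]
then decouples the zeros from the points $w_i$. Each factor is handled by the factor comparison in Lemma~\ref{lem:separation}, with the same $\lambda=\sqrt A$ and $a=1/A$. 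The zeros use radius $1$ and regularization $t^2/A$; the $w_i$ use radius $A$ and regularization $t^2A$.

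In kernel language, set $v_i=w_i/A$. The pair factor is then dominated by $A^2|1-a\overline{z_i}z_j|\,|1-a\overline{v_i}v_j|$. This is a geometric mean of two kernels, not a single kernel, which is why your one-family comparison cannot be arranged. Applying \eqref{eq:offdiag} to both families gives exactly $t^nA^{n(n-1)}J^n=((1+t)^n-1)^n$.
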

\begin{proof}
The case $t=0$ is immediate. Assume $t>0$ and put $A=1+t$, so that $|w_i|\le A$. The identity
\[
 (w_i-z_j)(w_j-z_i)
 =(w_i-z_i)(w_j-z_j)-(z_i-z_j)(w_i-w_j)
\]
implies
\[
 |w_i-z_j|\,|w_j-z_i|
 \le |z_i-z_j|\,|w_i-w_j|+t^2.
\]
For $X,Y\ge0$, the two-dimensional Cauchy--Schwarz inequality gives
\[
 (XY+t^2)^2\le(X^2+t^2/A)(Y^2+t^2A).
\]
Separate the diagonal factors and pair the off-diagonal factors indexed by $i<j$. This gives
\[
 \left(\prod_i|p(w_i)|\right)^2
 \le t^{2n}
 \prod_{i<j}(|z_i-z_j|^2+t^2/A)
 \prod_{i<j}(|w_i-w_j|^2+t^2A).
\]
We use the algebraic form of the separation estimate. If $|y_i|\le A_0$,
$\lambda\ge1$, $a=\lambda^{-2}<1$, and $\tau=A_0(\lambda-\lambda^{-1})$, then the factor comparison in Lemma~\ref{lem:separation} gives
\[
 \prod_{i<j}(|y_i-y_j|^2+\tau^2)
 \le(A_0^2\lambda^2)^{n(n-1)/2}
       \left(\frac{1-a^n}{1-a}\right)^n.
\]
Put $B=\sqrt A$, $a=A^{-1}$, and $J=(1-a^n)/(1-a)$. Then
\[
 t/B=B-B^{-1},\qquad tB=A(B-B^{-1}).
\]
Apply the preceding inequality with $(A_0,\lambda)=(1,B)$ to the zeros and with $(A_0,\lambda)=(A,B)$ to the points $w_i$. Writing $P=\prod_i|p(w_i)|$, the paired-product estimate becomes
\[
 P^2\le t^{2n}
   \bigl[A^{n(n-1)/2}J^n\bigr]
   \bigl[A^{3n(n-1)/2}J^n\bigr]
   =\bigl[tA^{n-1}J\bigr]^{2n}.
\]
Since $t=A-1$,
\[
 tA^{n-1}\frac{1-A^{-n}}{1-A^{-1}}=A^n-1.
\]
Both sides of the desired inequality are nonnegative. Taking square roots proves the proposition.
\end{proof}

\begin{corollary}[Exact radius for centers at zeros]\label{cor:baseline}
For every $p\in\Pn$, some zero $z_j$ satisfies
\[
 B(z_j,T_n)\subset\Om_p,\qquad T_n=2^{1/n}-1.
\]
Consequently, $R_n\ge T_n>(\log2)/n$. If the center is required to be a zero, then $T_n$ is the largest possible universal radius.
\end{corollary}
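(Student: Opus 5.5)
Suppose, to get a contradiction, that for every $j$ the disk $B(z_j,T_n)$ is not contained in $\Om_p$. Then each such disk contains a point $w_j$ with $|p(w_j)|\ge1$. Since the disk is open, $|w_j-z_j|<T_n$. By continuity of $p$ we can also pick $w_j$ on the boundary circle, so that $|w_j-z_j|\le T_n$ with $|p(w_j)|\ge1$. Proposition~\ref{prop:matched} with $t=T_n$ then gives
\[
 1\le\prod_j|p(w_j)|\le\bigl((1+T_n)^n-1\bigr)^n=(2-1)^n=1 .
\]
This is only equality, so to get a contradiction I have to sharpen one side. I would keep the open disks and use the points $w_j$ with $|w_j-z_j|<T_n$ strictly. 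Set $t=\max_j|w_j-z_j|<T_n$ and apply Proposition~\ref{prop:matched} with this $t$. The right side becomes $((1+t)^n-1)^n<1$, because $t\mapsto(1+t)^n-1$ is strictly increasing. This contradicts $\prod_j|p(w_j)|\ge1$. Therefore some disk $B(z_j,T_n)$ lies in $\Om_p$. Since the strict sublevel set is used, I need not worry about whether such $w_j$ exist with strict inequality: membership failure for an open disk already yields a point strictly inside.

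The bound $R_n\ge T_n$ is immediate, since $\rho(p)\ge T_n$ for every $p$. For the strict bound $T_n>(\log2)/n$, write $T_n=e^{(\log 2)/n}-1$ and use $e^x-1>x$ for $x>0$.

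For the sharpness of $T_n$ among zero-centered disks, I need some $p\in\Pn$ such that for every $r>T_n$ and every zero $z_j$, the disk $B(z_j,r)$ meets $\{|p|\ge1\}$. I would take $p(z)=(z-1)^n$, whose only zero is $z_1=1$, and whose sublevel set $\Om_p$ is exactly $B(1,1)$. That disk is too large, so this choice fails. Instead I would look at the equality case of Proposition~\ref{prop:matched}, which suggests $p(z)=z^n$. For it, $\Om_p=\D$ and the zero-centered disk $B(0,1)$ is again too large. A configuration realizing equality in Lemma~\ref{lem:kernel} needs the $u_j$ equally spaced, which points to $p(z)=z^n-c$ with $|c|\le1$, in the limit $|c|=1$. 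Take $c=-1$ and the zero $z_j=e^{i\pi/n}\omega$, where $\omega$ is an $n$th root of unity. The point $w=(1+T_n)z_j$ gives $|p(w)|=|(1+T_n)^n z_j^n+1|=|{-2}+1|=1$, so $w\notin\Om_p$. Since $|w-z_j|=T_n$, for every $r>T_n$ the disk $B(z_j,r)$ contains $w$. This holds at every zero by symmetry, so no radius larger than $T_n$ works universally, which proves optimality.

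The main obstacle is the first step, namely extracting strictness so as to turn the equality $1\le1$ into a contradiction. The fix is to exploit openness of the disks together with strict monotonicity of the matched-product bound in $t$. The extremal example is then a direct check once $z^n+1$ is chosen.
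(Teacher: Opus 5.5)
Your proof is correct and matches the paper's argument. Openness of the disks gives $t=\max_j|w_j-z_j|<T_n$, and Proposition~\ref{prop:matched} with the strict monotonicity of $(1+t)^n-1$ then gives the contradiction; $e^x>1+x$ gives the strict bound, and your extremal example $z^n+1$ with the point $2^{1/n}z_j$ is just a rotation of the paper's $z^n-1$. The boundary-point detour and the trial examples $(z-1)^n$ and $z^n$ can simply be deleted; the boundary detour would in any case need the maximum modulus principle, not mere continuity.
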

\begin{proof}
If every such disk failed, choose $w_i\in B(z_i,T_n)$ with $|p(w_i)|\ge1$. Since the configuration is finite,
$t=\max_i|w_i-z_i|<T_n$. Proposition~\ref{prop:matched} gives the contradiction
\[
 1\le\prod_i|p(w_i)|\le\bigl((1+t)^n-1\bigr)^n<1.
\]
For $p(z)=z^n-1$, the point $2^{1/n}z_j$ has $|p|=1$ and lies at distance $T_n$ from $z_j$. Thus every larger disk centered at $z_j$ contains a point outside $\Om_p$. Finally, $e^x>1+x$ for $x>0$ gives the strict logarithmic bound.
\end{proof}

This obstruction concerns the restriction on the center. Figure~\ref{fig:root} displays the certified zero-centered disk for the model with six zeros. The remaining sections allow the center to vary and determine the sharp asymptotic coefficient.
\begin{figure}[htbp]
\centering\begin{tikzpicture}[scale=5,>=Latex]
\fill[blue!5] (0,0) -- plot[domain=-14.99:14.99,samples=181] ({(2*cos(6*\x))^(1/6)*cos(\x)},{(2*cos(6*\x))^(1/6)*sin(\x)}) -- cycle;
\draw[blue!60!black,thick] (0,0) -- plot[domain=-14.99:14.99,samples=181] ({(2*cos(6*\x))^(1/6)*cos(\x)},{(2*cos(6*\x))^(1/6)*sin(\x)}) -- cycle;
\draw[->,gray] (-.04,0)--(1.23,0) node[right]{$\Re z$};
\draw[blue,thick] (1,0) circle[radius={2^(1/6)-1}];
\fill (1,0) circle[radius=.009] node[below=7pt]{$1$};
\draw[blue] (1,0)--(1,{2^(1/6)-1}) node[above=2pt]{$T_6$};
\node[below left] at (0,0){$0$};
\end{tikzpicture}
\caption{The principal lobe of $\{|z^6-1|<1\}$ and the open disk $B(1,T_6)$, where $T_6=2^{1/6}-1$. The disk inclusion is the zero-centered result in Corollary~\ref{cor:baseline}; its rightmost boundary point satisfies $|z^6-1|=1$. The boundary curves are schematic renderings of the indicated exact sets.}\label{fig:root}
\end{figure}

\section{Small inradius and exponential radial stability}\label{sec:radial}
Fix $C>0$. We consider polynomials satisfying
\begin{equation}\label{eq:small}
 p\in\Pn,\qquad \rho(p)\le q:=C/n.
\end{equation}
Whenever a conclusion is asserted for sufficiently large $n$, its threshold depends only on $C$, not on the configuration of zeros.

\begin{lemma}[Variance disk]\label{lem:variance}
Let $\mu=n^{-1}\sum_jz_j$. Then
\begin{equation}\label{eq:variance}
 \rho(p)^2\ge\frac1n\sum_j(1-|z_j|^2)+|\mu|^2.
\end{equation}
In particular, under~\eqref{eq:small}, if $C^2<n$, then
\begin{equation}\label{eq:zero-free}
 |z_j|\ge r_0:=\sqrt{1-C^2/n}\qquad(1\le j\le n).
\end{equation}
\end{lemma}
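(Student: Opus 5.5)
The plan is to exhibit one explicit disk, centered at the centroid $\mu$, on which $|p|<1$. The tool is the arithmetic–geometric mean inequality applied to the squared distances $|z-z_j|^2$, which turns the product $|p(z)|^2$ into an average of quadratics in $z$.

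First I rewrite the right side of~\eqref{eq:variance}. Put $V=\frac1n\sum_j|z_j-\mu|^2$. Expanding gives the parallel-axis identity
\[
 V=\frac1n\sum_j|z_j|^2-|\mu|^2,
\]
so the right side of~\eqref{eq:variance} equals $1-V$. Also $0\le V\le\frac1n\sum_j|z_j|^2\le1$. If $V=1$ the claim is trivial, since $\rho(p)>0$ by Corollary~\ref{cor:baseline}. So assume $V<1$.

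For any $z\in\C$, AM--GM gives
\[
 |p(z)|^{2/n}=\Bigl(\prod_j|z-z_j|^2\Bigr)^{1/n}\le\frac1n\sum_j|z-z_j|^2=|z-\mu|^2+V.
\]
The last equality is again the parallel-axis identity, now centered at $z$. If $|z-\mu|^2<1-V$, the right side is $<1$, hence $|p(z)|<1$. Therefore $B(\mu,\sqrt{1-V})\subset\Om_p$, and so $\rho(p)^2\ge1-V$, which is~\eqref{eq:variance}.

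For~\eqref{eq:zero-free}, drop the nonnegative term $|\mu|^2$ and all summands except the $j$-th. Using $\rho(p)\le C/n$, this gives
\[
 \frac1n(1-|z_j|^2)\le\frac{C^2}{n^2},\qquad\text{so}\qquad |z_j|^2\ge1-\frac{C^2}{n}.
\]
The right side is positive exactly when $C^2<n$, and taking square roots yields~\eqref{eq:zero-free}.

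There is no real obstacle. The only points needing care are recognizing the right side of~\eqref{eq:variance} as $1-V$ and handling the degenerate case $V=1$.
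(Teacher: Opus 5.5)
Your proof is correct and matches the paper's argument: the parallel-axis identity centered at $z$, AM--GM on the numbers $|z-z_j|^2$ to get $B(\mu,\sqrt{1-V})\subset\Om_p$, and then dropping nonnegative terms under $\rho(p)\le C/n$. The only difference is cosmetic: in the degenerate case you invoke Corollary~\ref{cor:baseline}, whereas $\rho(p)\ge0$ already suffices, which is what the paper uses.
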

\begin{proof}
The variance identity is
\[
 \sum_j|z-z_j|^2=n|z-\mu|^2+\sum_j|z_j|^2-n|\mu|^2.
\]
For nonnegative numbers with sum strictly less than $n$, the arithmetic--geometric mean inequality makes their product strictly less than $1$; if a factor is zero, this conclusion is immediate. Apply this observation to $|z-z_j|^2$. It follows that $\sum_j|z-z_j|^2<n$ implies $|p(z)|^2<1$, hence $|p(z)|<1$.

The number $V=1-n^{-1}\sum_j|z_j|^2+|\mu|^2$ is nonnegative because $|z_j|\le1$. If $|z-\mu|<\sqrt V$, the variance identity gives $\sum_j|z-z_j|^2<n$. Thus $B(\mu,\sqrt V)\subset\Om_p$ and $\rho(p)\ge\sqrt V$.
This proves~\eqref{eq:variance}; if that radius is zero, the inequality follows from $\rho(p)\ge0$. Under~\eqref{eq:small},
\[
 \sum_j(1-|z_j|^2)\le nq^2=C^2/n.
\]
All summands are nonnegative, which gives~\eqref{eq:zero-free}.
\end{proof}

On the zero-free disk $B(0,r_0)$, put
\[
 h(z)=\log|p(z)|,\qquad
 M(s)=\max_{|z|\le s}[-h(z)]\quad(0\le s<r_0).
\]
The function $h$ is harmonic. Since $h(0)\le0$, the function $M$ is nonnegative and nondecreasing.

\begin{lemma}[Local Harnack contraction]\label{lem:contraction}
If $0\le a<b<r_0$ and $d=b-a>q$, then
\begin{equation}\label{eq:contraction}
 M(a)\le\frac{2q}{d+q}M(b).
\end{equation}
\end{lemma}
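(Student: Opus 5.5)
Fix $z_0$ with $|z_0|\le a$ attaining $M(a)$, so that $-h(z_0)=M(a)$. Throughout, the disk $B(z_0,d)$ lies in $B(0,b)\subset B(0,r_0)$. On $\overline{B(0,b)}$ we have $-h\le M(b)$, hence $u:=M(b)+h\ge0$ is a nonnegative harmonic function on $B(z_0,d)$. The small-inradius hypothesis $\rho(p)\le q$ says that every disk of radius slightly larger than $q$ meets $\{|p|\ge1\}=\{h\ge0\}$. The plan is to use this point together with Harnack's inequality for $u$ to push $u(z_0)$ up, which forces $-h(z_0)$ down.

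Pick $q<q'<d$. The disk $B(z_0,q')$ is not contained in $\Om_p$, so there is $w$ with $|w-z_0|<q'$ and $h(w)\ge0$, hence $u(w)\ge M(b)$. Harnack's inequality for the nonnegative harmonic function $u$ on $B(z_0,d)$, evaluated at the point $w$ with $|w-z_0|=r<d$, gives
\[
 u(w)\le\frac{d+r}{d-r}\,u(z_0).
\]
Therefore
\[
 M(b)-M(a)=u(z_0)\ge\frac{d-r}{d+r}\,M(b)\ge\frac{d-q'}{d+q'}\,M(b).
\]
Rearranging gives $M(a)\le\frac{2q'}{d+q'}M(b)$. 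Letting $q'\downarrow q$ yields~\eqref{eq:contraction}, because the bound is continuous in $q'$.

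The key points to check are the following. First, $B(z_0,d)\subset B(0,b)$, which holds since $|z_0|\le a=b-d$. Second, we need $r<d$; this is guaranteed because $r<q'<d$, and it is exactly where $d>q$ is used. Third, a degenerate case: $u$ might vanish identically, or $M(b)=0$. In that case $h\ge0$ near the center, and the inequality is trivial. Harnack's inequality applies to nonnegative harmonic functions including zero, so no separate argument is required. I expect no real obstacle. The only care needed is the choice of $q'$ strictly between $q$ and $d$, together with the final limit, since the inradius is a supremum and need not be attained.
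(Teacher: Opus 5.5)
Your proposal is correct and follows the paper's proof essentially verbatim: the paper also takes a point $w$ with $h(w)\ge0$ in $B(z,q+\eta)$, applies Harnack's inequality to $h+M(b)$ on $B(z,d)$, rearranges, and lets $\eta\downarrow0$. The only cosmetic difference is that you fix a maximizer $z_0$ of $-h$ on $|z|\le a$ at the start, whereas the paper bounds $-h(z)$ for arbitrary $|z|\le a$ and maximizes at the end.
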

\begin{proof}
Fix $|z|\le a$ and $0<\eta<d-q$. Since $\rho(p)\le q$, the disk $B(z,q+\eta)$ cannot be contained in $\Om_p$. It therefore contains a point $w$ with $h(w)\ge0$. The enlargement by $\eta$ is needed to include the case $\rho(p)=q$.

The function $U=h+M(b)$ is harmonic on a neighborhood of $\overline B(z,d)$ and nonnegative on that disk. Harnack's inequality yields
\[
 M(b)\le U(w)
 \le\frac{d+|w-z|}{d-|w-z|}U(z)
 \le\frac{d+q+\eta}{d-q-\eta}\,[h(z)+M(b)].
\]
This form remains valid when $U(z)=0$. Rearranging, we obtain
\[
 -h(z)\le\frac{2(q+\eta)}{d+q+\eta}M(b).
\]
Let $\eta\downarrow0$ and then maximize over $|z|\le a$. No convergence of the chosen points $w$ is required.
\end{proof}

\begin{figure}[htbp]
\centering\begin{tikzpicture}[scale=1.25,>=Latex]
\draw[gray!55] (0,0) circle(2.05);
\draw[gray,dashed] (0,0) circle(.65);
\fill[blue!4] (.65,0) circle(1.4);
\draw[blue!60!black,thick] (.65,0) circle(1.4);
\draw[orange!85!black,thick] (.65,0) circle(.65);
\draw[->,gray] (-2.25,0)--(2.4,0);
\fill (0,0) circle(.035) node[below left]{$0$};
\fill[blue!70!black] (.65,0) circle(.035) node[below]{$z$};
\fill[orange!85!black] (1.06,.32) circle(.04) node[below right]{$w$};
\draw[orange!85!black,->] (2.4,1.6)--(1.15,.41);
\node[orange!85!black,above] at (2.4,1.6){$h(w)\ge0$};
\draw[blue!60!black,<->] (.65,-.04)--(.65,-1.4) node[midway,left]{$d$};
\node[orange!85!black] at (.65,.91){$q+\eta$};
\node at (-1.4,-1.55){$|v|\le b$};
\node at (-.45,.68){$|z|\le a$};
\node[align=left,anchor=west] at (2.7,.25){$U=h+M(b)\ge0$\\[5pt]$U(w)\le\dfrac{d+|w-z|}{d-|w-z|}U(z)$};
\end{tikzpicture}
\caption{The local geometry of Harnack contraction. The larger disk centered at $z$ lies in the zero-free region. The inradius assumption supplies a point $w$ in the smaller disk with $h(w)\ge0$. Harnack's inequality for $h+M(b)$ transfers this information to $z$. The diagram is schematic.}\label{fig:harnack}
\end{figure}

\begin{proposition}[Exponential radial stability]\label{prop:radial}
Assume~\eqref{eq:small} and
\begin{equation}\label{eq:size}
 n\ge256C,\qquad 15n\ge64C^2.
\end{equation}
Then
\begin{equation}\label{eq:radial}
 \Delta:=\sum_j(1-|z_j|)
 \le-\log|p(0)|
 \le16C\,2^{-\lfloor n/(4C)\rfloor}.
\end{equation}
Moreover, with $\ell=\lfloor n/(12C)\rfloor$,
\begin{equation}\label{eq:half-envelope}
 M(1/2)\le16C\,2^{-\ell}.
\end{equation}
\end{proposition}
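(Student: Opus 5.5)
The plan is to iterate Lemma~\ref{lem:contraction} along a chain of radii from near $r_0$ down to $0$ (respectively $1/2$). The first inequality in~\eqref{eq:radial} is elementary: $-\log|p(0)|=-\sum_j\log|z_j|\ge\sum_j(1-|z_j|)$, since $-\log x\ge1-x$ on $(0,1]$. Lemma~\ref{lem:variance} guarantees $|z_j|\ge r_0>0$ (note $C^2<n$ follows from~\eqref{eq:size}), so $p(0)\ne0$. Also $-\log|p(0)|=-h(0)\le M(0)$. It therefore suffices to bound $M(0)$ and $M(1/2)$.

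\emph{Starting bound.} I will take an outer radius $b_0$ slightly below $r_0$, say $b_0=1-4C/n$ or any radius with $1-b_0\asymp C/n$. I need a crude bound on $M(b_0)$ of order $C$. For $|z|\le b_0$ and every $j$, the distance $|z-z_j|\ge |z_j|-|z|\ge r_0-b_0$. This lower bound is only of order $C/n$, and a product of $n$ such factors is useless. So I will instead use the harmonic lower bound through Lemma~\ref{lem:variance}. The quantity $-h$ is superharmonic-friendly: $h$ is harmonic on $B(0,r_0)$ and $h\le\log 2^n$ there, so $M(b_0)$ is controlled by Harnack once $h(0)$ is bounded below. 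I can bound $h(0)$ below directly: $-h(0)=-\sum\log|z_j|\le\sum_j(1-|z_j|^2)/(2r_0^2)\cdot$const $\le C^2/n$. However, the Harnack ratio on $B(0,b_0)$ against $B(0,r_0)$ blows up like $n/C$.

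A better route is to pick $b_0$ near $r_0$ and use the contraction lemma's own Harnack argument with a fixed point of nonnegativity. Cleanest, I think, is to apply Harnack to the nonnegative harmonic function $n\log 2-h$ on $B(0,r_0)$. For $|z|\le b_0$ with $r_0-b_0=\delta$ this gives $-h(z)\le (2r_0/\delta)(n\log2-h(0))+\ldots$, which is $O(n^2/C)$. That is polynomial in $n$, which is acceptable because the contraction factor will be exponential. The constants $16C$ will then come from choosing the chain carefully, possibly absorbing the polynomial start via a few extra halving steps (this is where $n\ge256C$ is used). I expect reconciling this crude start with the clean constant $16C$ to be the main obstacle. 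If it fails, I will instead bound $M$ at radius $1-O(C/n)$ by an average/mean-value argument in which $-h(z)\le\sum_j\log^+(1/|z-z_j|)$ is integrated, using that $-h$ at a point of small $|p|$ near a point where $h\ge0$ (distance $\le q$) is controlled by Harnack with ratio $O(1)$ on a disk of radius $\asymp C/n$. That already yields $O(C)$-type bounds.

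\emph{Iteration.} With step size $d=3q$, each application of~\eqref{eq:contraction} gives factor $2q/(4q)=1/2$. The interval from $b_0\approx1-O(C/n)$ down to $0$ contains about $n/(3C)$ steps; I will use $\lfloor n/(4C)\rfloor$ steps of length $4q$, with factor $2/5<1/2$, leaving slack for the starting margin. This gives $M(0)\le 2^{-\lfloor n/(4C)\rfloor}M(b_0)$. From $b_0$ down to $1/2$ there is roughly half the length, and $\lfloor n/(12C)\rfloor$ steps of size $6q$ fit with room (factor $2/7$). The conditions~\eqref{eq:size} ensure that $r_0$ exceeds $1-C^2/n\ge$ the needed margin ($15n\ge64C^2$ gives $r_0\ge1/8$-type closeness, i.e.\ $1-r_0\le 32C^2/(15n)\cdot$const) and that $d>q$ holds with the step count fitting in $[0,b_0]$. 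Finally I combine the results to obtain~\eqref{eq:radial} and~\eqref{eq:half-envelope}.
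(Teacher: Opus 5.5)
Your first inequality in \eqref{eq:radial} is correct, and iterating Lemma~\ref{lem:contraction} with steps $d=3q$ (factor $1/2$) is the right mechanism. The gap is where you suspect it: you never get a starting bound of size $O(C)$ for $M$ at a radius from which the prescribed numbers of halvings fit.

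\emph{Your main route cannot reach the stated constants.} Harnack for $n\log2-h\ge0$ on $B(0,r_0)$ gives at best $M(b_0)\le\frac{2b_0}{r_0-b_0}n\log2+\frac{r_0+b_0}{r_0-b_0}(-h(0))$, and \eqref{eq:size} leaves no room to absorb this polynomial loss. The hypotheses allow $C=60$, $n=15360$, where $q=1/256$, $r_0=7/8$, $m=64$ and $16C=960$.
\begin{itemize}
\item A chain of $3q$-steps inside $[0,b_0]$ has at most $\lfloor 256b_0/3\rfloor\le74$ steps, so at most $\lfloor 10.7-85.3(r_0-b_0)\rfloor$ beyond the required $m$.
\item Your start exceeds $16C$ by a factor of at least $(32\log2)\,b_0/(r_0-b_0)$.
\item The base-$2$ logarithm of this factor exceeds the number of spare steps for every $b_0\in[3/4,r_0)$, which is the only range where $m$ such steps fit.
\end{itemize}

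\emph{The bookkeeping is also off.}
\begin{itemize}
\item $b_0=1-4C/n$ need not lie in $B(0,r_0)$, because $1-r_0\approx C^2/(2n)$ can be much larger than $4C/n$.
\item $\lfloor n/(4C)\rfloor$ steps of length $4q$ have total length up to $1>b_0$.
\item $\lfloor n/(12C)\rfloor$ steps of length $6q$ have total length up to $1/2>b_0-1/2$. So neither chain fits, let alone with room.
\end{itemize}

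\emph{The fallback fails too.} $M$ at radius $1-O(C/n)$ is not controlled a priori: it is $+\infty$ if some zero has modulus at most that radius, and Lemma~\ref{lem:variance} does not exclude this. Also, Harnack for $n\log2-h$ on a disk of radius $\asymp C/n$ gives only $-h(v)=O(n)$, not $O(C)$.

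\emph{The missing idea} is to start at a fixed interior radius, where the zeros are at distance at least $r_0-3/4\ge1/8$ (under \eqref{eq:size} one has $r_0\ge7/8$ and $q\le1/256$).
\begin{itemize}
\item For $|v|\le3/4$, the termwise estimate $\log|w-z_j|-\log|v-z_j|\le|w-v|/|v-z_j|$ gives the one-sided bound $h(w)-h(v)\le n|w-v|/(r_0-3/4)$.
\item The inradius hypothesis supplies $w\in B(v,q+\eta)$ with $h(w)\ge0$, and this $w$ lies in the zero-free disk.
\item Hence $-h(v)\le n(q+\eta)/(r_0-3/4)$. Letting $\eta\downarrow0$ gives $M(3/4)\le8C\le16C$, with no polynomial loss.
\end{itemize}
Your Harnack instinct also works if the disk around $v$ has radius close to $1/8$ rather than $\asymp C/n$. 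Applied to $n\log2-h$ on $B(v,d)$, it gives $-h(v)\le 2qn\log2/(d-q)\le12C$.

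The constants in the statement are calibrated to this start. With $d=3q$, the $m=\lfloor n/(4C)\rfloor$ steps from $3/4$ end at $3/4-3qm\ge0$. The $\ell=\lfloor n/(12C)\rfloor$ steps end at $3/4-3q\ell\ge1/2$. Monotonicity of $M$ then yields \eqref{eq:radial} and \eqref{eq:half-envelope}.
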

\begin{proof}
The size assumptions give $q\le1/256$ and $r_0\ge7/8$. We first compare logarithmic factors directly. For $v,w\ne\zeta$, the triangle inequality and $\log u\le u-1$ for $u>0$ give
\[
 \log|w-\zeta|-\log|v-\zeta|
 =\log\frac{|w-\zeta|}{|v-\zeta|}
 \le\frac{|w-v|}{|v-\zeta|}.
\]
If $|v|\le s<r_0$, every denominator $|v-z_j|$ is at least $r_0-s$. Summing over the zeros yields
\[
 h(w)-h(v)\le\frac{n|w-v|}{r_0-s}
\]
whenever neither evaluation point is a zero.

Take $s=3/4$ and choose $0<\eta<r_0-s-q$. The inradius assumption gives $w\in B(v,q+\eta)$ with $h(w)\ge0$; this point is in the zero-free disk. Hence
\[
 -h(v)\le\frac{n(q+\eta)}{r_0-s}.
\]
Letting $\eta\downarrow0$ and taking the maximum gives
\begin{equation}\label{eq:initial-envelope}
 M(3/4)\le\frac C{r_0-3/4}
 \le\frac C{r_0-3/4-q}
 \le\frac{256C}{31}\le16C.
\end{equation}
For the last two inequalities, $r_0-3/4-q\ge31/256>0$ and $C>0$.

In Lemma~\ref{lem:contraction}, take steps of length $d=3q$. The contraction factor is then $1/2$. Set
\[
 m=\left\lfloor\frac n{4C}\right\rfloor,\qquad
 b_j=\frac34-3qj\quad(0\le j\le m).
\]
Since $b_m\ge0$, iteration and monotonicity yield
\[
 -h(0)=M(0)\le M(b_m)\le2^{-m}M(3/4)\le16C\,2^{-m}.
\]
Summing $1-r\le-\log r$ over $r=|z_j|\in(0,1]$ proves~\eqref{eq:radial}. If only $\ell$ steps are taken, then $b_\ell\ge1/2$, and the same argument proves~\eqref{eq:half-envelope}.
\end{proof}

\begin{proposition}[Decay of the low reciprocal moments]\label{prop:moments}
Under~\eqref{eq:small} and~\eqref{eq:size}, define
\[
 A_k=\sum_jz_j^{-k},\qquad K=\left\lfloor\frac n{24C}\right\rfloor.
\]
Then
\begin{equation}\label{eq:moments}
 |A_k|\le96Ck(2/3)^K\qquad(1\le k\le K).
\end{equation}
\end{proposition}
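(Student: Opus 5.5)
The plan is to read off the moments $A_k$ as Taylor coefficients of the harmonic function $h=\log|p|$ on the zero-free disk $B(0,r_0)$, and to bound those coefficients through the envelope $M$ controlled in Proposition~\ref{prop:radial}. Since $p$ has no zeros in $B(0,r_0)$, on this disk we have
\[
 \log\frac{p(z)}{p(0)}=\sum_j\log\Bigl(1-\frac z{z_j}\Bigr)=-\sum_{k\ge1}\frac{A_k}{k}z^k .
\]
Writing $g(z)=h(z)-h(0)=\Re\log(p(z)/p(0))$, the Fourier coefficients of $g$ on the circle $|z|=s$ are
\[
 \frac1{2\pi}\int_0^{2\pi}g(se^{i\theta})e^{-ik\theta}\,d\theta=-\frac{A_k}{2k}s^k\qquad(k\ge1,\ s<r_0).
\]
It follows that $|A_k|\le (2k/s^k)\cdot\frac1{2\pi}\int|g|$. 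The factor $k$ in~\eqref{eq:moments} comes directly from this identity.

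Next I bound $\int|g|$ on $|z|=s$ with $s=1/2$, using only one-sided information. By~\eqref{eq:half-envelope}, $-h\le M(1/2)\le16C2^{-\ell}$ on $|z|=1/2$. I also need an upper bound on $h$. The mean value property gives $\frac1{2\pi}\int h\,d\theta=h(0)\le0$, so the mean of $h^+$ equals the mean of $h^-$ plus $h(0)$, which is at most the mean of $h^-$. Hence the mean of $|h|$ is at most $2M(1/2)$. Together with $|h(0)|\le M(1/2)$, this gives $\frac1{2\pi}\int|g|\le3M(1/2)\le48C2^{-\ell}$. Therefore
\[
 |A_k|\le 2k\cdot2^{k}\cdot48C\,2^{-\ell}=96Ck\,2^{k-\ell}.
\]

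It remains to compare this with $(2/3)^K$ for $1\le k\le K$, where $\ell=\lfloor n/(12C)\rfloor\ge 2K$. We have $2^{k-\ell}\le2^{K-2K}=2^{-K}\le(2/3)^K$. The worst case is $k=K$, and there $2^{-K}$ is still below $(2/3)^K$, so~\eqref{eq:moments} follows with room to spare. The inequality $\ell\ge2K$ follows from $\lfloor x\rfloor\ge2\lfloor x/2\rfloor$ applied with $x=n/(12C)$.

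The delicate step is the $L^1$ bound in the previous paragraph. The envelope $M$ controls only $-h$, so one must not use $\max|h|$. The mean-value argument giving mean of $h^+\le$ mean of $h^-$ is the key point, and it relies on $h(0)\le0$. If the constant came out worse, I would move to a circle of radius $s$ slightly larger than $1/2$, using the contraction lemma with fewer steps to keep the envelope small. The stated $(2/3)^K$ suggests the authors may have used such a radius near $3/4$ with a weaker envelope. The $s=1/2$ route seems sufficient.
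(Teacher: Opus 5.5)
Your argument is correct, but it reaches the coefficient bound by a different mechanism than the paper.

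\emph{The paper's route.} It works with $G(z)=\sum_k (A_k/k)z^k$ and uses $\Re G=h(0)-h\le E=16C\,2^{-\ell}$ on $|z|<1/2$. It applies the Borel--Carath\'eodory inequality to get $|G|\le 6E$ on the smaller circle $|z|=3/8$, and then uses Cauchy's estimate there. This gives $|A_k|\le 96Ck(8/3)^k2^{-\ell}$. The factor $(2/3)^K$ comes from $(8/3)^K2^{-2K}$ via $\ell\ge2K$, not from a radius near $3/4$ as you guessed.

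\emph{Your route.} You read off $A_k$ directly as a Fourier coefficient of $h-h(0)$ on the circle $|z|=1/2$, where the envelope \eqref{eq:half-envelope} holds. You bound its $L^1$ mean using only the one-sided bound, the mean value property and $h(0)\le0$. Because you never shrink the radius, you get $|A_k|\le96Ck\,2^{k-\ell}\le96Ck\,2^{-K}$, which is strictly stronger than \eqref{eq:moments}. Your argument is essentially the proof of Carath\'eodory's coefficient inequality. Applying it to the nonnegative function $M(1/2)+h-h(0)$ on $|z|=1/2$, whose $k$-th Fourier coefficients for $k\ge1$ are bounded in modulus by its mean $M(1/2)$, would even improve the constant to $32Ck\,2^{k-\ell}$.

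\emph{Comparison.} The paper's route packages the step into two standard theorems, which is convenient for the formalization, at the cost of the $(8/3)^k$ loss. Yours is more elementary and sharper. Either bound suffices for the later use in Proposition~\ref{prop:uniform-log}, which only needs $E_n\to0$ and $e_n\to0$.
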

\begin{proof}
On $|z|<r_0$, define the analytic logarithmic potential normalized to vanish at zero by
\begin{equation}\label{eq:potential}
 G(z)=-\sum_j\log(1-z/z_j)
     =\sum_{k\ge1}\frac{A_k}{k}z^k.
\end{equation}
Each logarithm is the branch determined by its power series on the unit disk. Thus
\begin{equation}\label{eq:potential-real}
 e^{-G(z)}=\frac{p(z)}{p(0)},\qquad
 \Re G(z)=h(0)-h(z).
\end{equation}
Since $h(0)\le0$, inequality~\eqref{eq:half-envelope} implies
$\Re G\le E:=16C2^{-\ell}$ on $|z|<1/2$.

The Borel--Carath\'eodory estimate for an analytic function vanishing at zero gives
\[
 |G(z)|\le\frac{2E|z|}{1/2-|z|}\qquad(|z|<1/2).
\]
In particular, $|G(z)|\le6E$ when $|z|=3/8$. Cauchy's derivative estimate and $G^{(k)}(0)=k!A_k/k$ imply
\[
 k!\frac{|A_k|}{k}\le\frac{k!\,6E}{(3/8)^k},\qquad
 |A_k|\le96Ck(8/3)^k2^{-\ell}.
\]
Finally, $\ell\ge2K$ and $k\le K$, so
\[
 (8/3)^k2^{-\ell}\le(8/3)^K2^{-2K}=(2/3)^K.
\]
\end{proof}

Radial stability alone does not control the arguments of the zeros. The reciprocal moments in~\eqref{eq:moments} retain the angular information needed below. We use them directly in an analytic estimate.

\section{Direct control of the analytic logarithm}\label{sec:direct}
We require control of the full analytic logarithm on circles at distance $O(1/n)$ from the unit circle. The normalized logarithmic derivative has a real-part bound independent of the configuration. Removing the small low-order terms leaves a high-order zero, which makes this bound effective.

\begin{lemma}[Half-plane estimate with a high-order zero]\label{lem:high-bc}
Let $R>0$, and let $f$ be holomorphic on $B(0,R)$, with $\Re f\le M$ for some $M>0$. If $f$ vanishes to order at least $m$ at zero, where $m\ge1$ is an integer, then, for $|z|<R$,
\begin{equation}\label{eq:high-bc}
 |f(z)|\le\frac{2M\theta}{1-\theta},
 \qquad \theta=(|z|/R)^m.
\end{equation}
\end{lemma}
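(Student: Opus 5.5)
Since $\Re f\le M$ and $f(0)=0$, the natural plan is to map the half-plane $\{\Re w<M\}$ to the unit disk and apply the Schwarz lemma in its high-order form. Put
\[
 \phi(z)=\frac{f(z)}{2M-f(z)}.
\]
The denominator never vanishes because $\Re(2M-f)\ge M>0$. For $w$ with $\Re w\le M$ we have $|w|\le|2M-w|$, since the second point is the reflection of $w$ across the line $\Re w=M$ and lies on the far side of it from $w$. Hence $|\phi|\le1$ on $B(0,R)$. Moreover $\phi$ vanishes to order at least $m$ at $0$, because $f$ does and the denominator equals $2M\neq0$ there.

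The key step is the high-order Schwarz lemma: $|\phi(z)|\le(|z|/R)^m=\theta$. To prove it, consider $\psi(z)=\phi(z)/z^m$, which is holomorphic on $B(0,R)$. On $|z|=r<R$ we have $|\psi|\le r^{-m}$, so by the maximum principle $|\psi|\le r^{-m}$ on $B(0,r)$. Letting $r\uparrow R$ gives $|\psi|\le R^{-m}$, and therefore $|\phi(z)|\le\theta<1$ for $|z|<R$.

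Finally, invert the transform: $f=2M\phi/(1+\phi)$. This gives
\[
 |f(z)|\le\frac{2M|\phi(z)|}{1-|\phi(z)|}\le\frac{2M\theta}{1-\theta},
\]
because $t\mapsto t/(1-t)$ is increasing on $[0,1)$.

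No step is a real obstacle. The only points needing care are that $\phi$ is bounded by $1$ rather than merely by some finite constant, and that taking the limit $r\uparrow R$ is legitimate when $f$ is not assumed continuous up to the boundary. Both are handled above. The hypothesis $M>0$ guarantees that the denominator is nonzero even if $f$ takes the value $M$ somewhere.
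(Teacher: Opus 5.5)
Your proof is correct and follows the paper's argument: the same Cayley-type map $\phi=f/(2M-f)$, the bound $|f|\le|2M-f|$ (which the paper obtains from $|2M-f|^2-|f|^2=4M(M-\Re f)\ge0$), the high-order Schwarz lemma, and the inversion $f=2M\phi/(1+\phi)$; you also prove the Schwarz step via $\phi/z^m$ and $r\uparrow R$, which the paper only cites. One wording fix: $2M-w$ is not the reflection of $w$ across the line $\Re w=M$ (that reflection is $2M-\bar w$, which has the same modulus), so it is cleaner to say that $\Re w\le M$ places $w$ on the side of the perpendicular bisector of $0$ and $2M$ nearer to $0$, or simply to cite the algebraic identity.
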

\begin{proof}
Set $W=f/(2M-f)$. The real-part hypothesis implies $|2M-f|^2-|f|^2=4M(M-\Re f)\ge0$. The denominator has positive real part, so $W$ is holomorphic and $|W|\le1$. The denominator equals $2M$ at zero, so $W$ also vanishes to order at least $m$. The high-order Schwarz lemma gives $|W(z)|\le(|z|/R)^m$. Solving $f=2MW/(1+W)$ proves~\eqref{eq:high-bc}.
\end{proof}

\begin{lemma}[Radial primitive estimate]\label{lem:primitive}
Let $r>0$, and suppose that $g$ is holomorphic on $B(0,r)$, $g(0)=0$, and
\[
 |zg'(z)|\le A(|z|/r)^m\qquad(|z|<r),
\]
where $A\ge0$ and $m$ is a positive integer. Then
\begin{equation}\label{eq:primitive}
 |g(z)|\le\frac A m(|z|/r)^m.
\end{equation}
\end{lemma}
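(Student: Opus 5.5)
The plan is to integrate $g'$ along the radial segment from $0$ to $z$. Since $g(0)=0$ and $g$ is holomorphic on $B(0,r)$, for $|z|<r$ we write
\[
 g(z)=\int_0^1 \frac{d}{dt}g(tz)\,dt=\int_0^1 z\,g'(tz)\,dt=\int_0^1 \frac{(tz)g'(tz)}{t}\,dt .
\]
The integrand is continuous on $(0,1]$. Near $t=0$ the expression $(tz)g'(tz)/t=zg'(tz)$ is bounded, so the integral is an ordinary one on $[0,1]$ and no improper-integral issue arises.

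Next apply the hypothesis at the point $tz$, which satisfies $|tz|<r$:
\[
 |(tz)g'(tz)|\le A\,(t|z|/r)^m .
\]
This gives
\[
 |g(z)|\le\int_0^1 \frac{A\,t^m(|z|/r)^m}{t}\,dt
 =A(|z|/r)^m\int_0^1 t^{m-1}\,dt=\frac Am(|z|/r)^m .
\]
The last integral converges because $m\ge1$. This is exactly \eqref{eq:primitive}.

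The case $z=0$ is trivial, since $g(0)=0$. I do not expect a real obstacle. The only point needing care is dividing by $t$: it is legitimate because the quantity estimated is $zg'(tz)$, which is bounded, and the factor $t^m$ supplied by the hypothesis turns the bound into the integrable weight $t^{m-1}$.

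An alternative route is by power series. Write $g=\sum_{k\ge m}c_kz^k$; the hypothesis forces $c_k=0$ for $k<m$ by comparing orders of vanishing. Then $g=\sum_k (kc_k/k)z^k$, but bounding it coefficientwise loses the sup-norm control, so the radial integral is the route I would use.
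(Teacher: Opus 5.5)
Your proof is correct and is essentially the paper's argument: the paper likewise sets $f(t)=g(tz)$, uses $f'(t)=zg'(tz)$ and the hypothesis at $tz$ to get $|f'(t)|\le At^{m-1}(|z|/r)^m$ for $0<t<1$, and then integrates. Your observation that the integrand is really $zg'(tz)$, so the bound is needed only for $t>0$ and no division by $t$ occurs at the endpoint, matches the paper's own remark.
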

\begin{proof}
Fix $|z|<r$ and set $f(t)=g(tz)$ for $0\le t\le1$. The radial segment stays in $B(0,r)$, so $f$ is continuous on $[0,1]$ and differentiable on $(0,1)$, with $f'(t)=zg'(tz)$. For $0<t<1$, the hypothesis implies
\[
 t|f'(t)|\le A t^m(|z|/r)^m,
 \qquad |f'(t)|\le A t^{m-1}(|z|/r)^m.
\]
The derivative bound is integrable because $m\ge1$. The norm estimate by the integral of a derivative therefore gives
\[
 |g(z)-g(0)|\le A(|z|/r)^m\int_0^1t^{m-1}\,dt
 =\frac A m(|z|/r)^m.
\]
Use $g(0)=0$ to conclude. This argument includes $z=0$ and uses no division by $t$ at the endpoint.
\end{proof}

\begin{proposition}[Uniform left logarithmic bound]\label{prop:uniform-log}
Fix $C>0$. There is $D=D(C)>0$ such that, for all sufficiently large $n$ and every $p\in\Pn$ with $\rho(p)\le C/n$, the following holds: if $a>1$ and $|z|=e^{-a/n}$, then
\begin{equation}\label{eq:uniform-log}
 |G(z)|\le E_n+D e^{-(a-1)/(24C)},\qquad
 E_n=96CK_n(2/3)^{K_n},\quad
 K_n=\left\lfloor\frac n{24C}\right\rfloor.
\end{equation}
Here $G$ is the analytic logarithm in~\eqref{eq:potential}, continued by the same power series to the disk of radius $\min_j|z_j|$, and $E_n\to0$. The degree threshold is independent of $a$ and of the zeros.
\end{proposition}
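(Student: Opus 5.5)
Split $G=P+g$, with $P(z)=\sum_{k\le K}\frac{A_k}{k}z^k$ the low-order part and $g(z)=\sum_{k>K}\frac{A_k}{k}z^k$ the tail, where $K=K_n$. For $|z|<1$, Proposition~\ref{prop:moments} gives
\[
 |P(z)|\le\sum_{k\le K}96C(2/3)^K=96CK(2/3)^K=E_n .
\]
So $P$ contributes exactly $E_n$, and the whole task is to bound the tail $g$ by $De^{-(a-1)/(24C)}$ on $|z|=e^{-a/n}$. Since $K(2/3)^K\to0$, we also get $E_n\to0$. Note that $|z|<1$ is needed here; $|z|=e^{-a/n}<1$ holds.

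To control $g$, use the logarithmic derivative, whose real part is bounded independently of the configuration. We have
\[
 zG'(z)=\sum_j\frac{z}{z_j-z}.
\]
On $|z|=r<|z_j|$, the Möbius image $z/(z_j-z)$ has real part at least $-r/(|z_j|+r)\ge-1/2$. Hence $\Re(-zG')\le1/2$ on the disk $B(0,r_*)$ with $r_*=\min_j|z_j|\ge r_0$. Remove the low-order piece by setting
\[
 f=-(zG'-zP')=-zg'.
\]
This function vanishes to order at least $K+1$ at zero. For $|z|<1$, $|zP'|\le\sum_{k\le K}|A_k|\le 96CK^2(2/3)^K=o(1)$, so for large $n$ we have $\Re f\le1$ on $B(0,r_*)$.

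Next apply Lemma~\ref{lem:high-bc} with $M=1$, $m=K+1$ and $R=r_*$. Lemma~\ref{lem:variance} only gives $r_*\ge\sqrt{1-C^2/n}$, which loses a factor $e^{-C^2/(2n)}$ per step, and raised to the power $K\sim n/(24C)$ this is harmless, only $O(1)$. Better, Proposition~\ref{prop:radial} gives $1-|z_j|\le\Delta$ with $\Delta$ exponentially small, so $R^{-m}\le(1-\Delta)^{-(K+1)}\le2$ for large $n$. For $|z|=e^{-a/n}$ with $a>1$,
\[
 \theta=(|z|/R)^{K+1}\le2e^{-a(K+1)/n}.
\]
Since $(K+1)/n\ge1/(24C)$, this gives $\theta\le2e^{-a/(24C)}$.

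The main obstacle is keeping $1-\theta$ away from $0$ uniformly in $a>1$. Near $a=1$, $\theta$ is about $2e^{-1/(24C)}$, which may exceed $1$. To fix this, I would work with the radius $R'=e^{-1/n}$ instead of $R$. This is allowed because $R'<R$ for large $n$, since $\Delta$ is $o(1/n)$. Then $\theta=e^{-(a-1)(K+1)/n}$, which is still not bounded below $1$ as $a\downarrow1$. This is exactly why the statement uses $a-1$ with a constant $D$: compare against the circle $|z|=e^{-1/(2n)}$ instead. Set
\[
 \theta=\exp\bigl(-(a-\tfrac12)(K+1)/n\bigr)\le e^{-1/(48C)}\,e^{-(a-1)/(24C)},
\]
so that $1-\theta\ge1-e^{-1/(48C)}$, a positive constant. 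Lemma~\ref{lem:high-bc} then gives
\[
 |zg'(z)|\le c_C\,(|z|/R')^{K+1},\qquad c_C=\frac{2}{1-e^{-1/(48C)}},
\]
valid on $|z|\le e^{-1/n}$ with $R'=e^{-1/(2n)}$.

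Finally, Lemma~\ref{lem:primitive} requires the bound $|zg'(z)|\le A(|z|/r)^m$ on the whole disk $|z|<r$. The bound above only holds for $|z|\le e^{-1/n}$, so take $r=e^{-1/n}$. Check the hypothesis on $|z|<r$: we have $(|z|/R')^{K+1}=(|z|/r)^{K+1}(r/R')^{K+1}$, and
\[
 (r/R')^{K+1}=e^{-(K+1)/(2n)}\le1,
\]
so the hypothesis holds with $A=c_C$ and $m=K+1$. Lemma~\ref{lem:primitive} then gives
\[
 |g(z)|\le\frac{c_C}{K+1}\,e^{-(a-1)(K+1)/n}\le c_C\,e^{-(a-1)/(24C)}.
\]
This proves the bound with $D=c_C$, and every threshold depends only on $C$.
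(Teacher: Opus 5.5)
Your outline follows the paper's route: split off $P$ and bound it by $E_n$ via Proposition~\ref{prop:moments}, apply Lemma~\ref{lem:high-bc} to the logarithmic derivative of the tail, then Lemma~\ref{lem:primitive}. However, your real-part bound is off by a factor of $n$. Each of the $n$ summands in $zG'(z)=\sum_j z/(z_j-z)$ has real part at least $-1/2$, so the correct conclusion is $\Re(-zG')\le n/2$, not $1/2$. This is why the paper works with the normalized derivative $T=-\frac{z}{n}G'$, for which $\Re T\le\frac12$.

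Consequently the claim $\Re f\le 1$ is false, and so is the intermediate bound you derive, $|g(z)|\le\frac{c_C}{K+1}e^{-(a-1)(K+1)/n}=O(1/n)$. A concrete counterexample is $p(z)=z^n-1$, which satisfies $\rho(p)\le 2/n$ for large $n$. Here all $A_k$ with $k\le K<n$ vanish, so $P\equiv0$ and $g=G=-\log(1-z^n)$. At $z=e^{-a/n}$ this equals $-\log(1-e^{-a})$, independent of $n$. Likewise $-zG'=-nz^n/(1-z^n)$ has real part close to $n/2$ when $z^n$ is near $-1$.

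The repair uses exactly the factor $1/(K+1)$ you discarded. For large $n$ we have $|zP'|\le 96CK^2(2/3)^K\le1$, so $\Re f\le n$ on $B(0,R')$ with $R'=e^{-1/(2n)}$. Lemma~\ref{lem:high-bc} with $M=n$ then gives $|zg'(z)|\le n\,c_C(|z|/R')^{K+1}\le n\,c_C(|z|/r)^{K+1}$ for $|z|\le r=e^{-1/n}$. Lemma~\ref{lem:primitive} now yields
\[
 |g(z)|\le\frac{n\,c_C}{K+1}\,e^{-(a-1)(K+1)/n}\le 24C\,c_C\,e^{-(a-1)/(24C)},
\]
because $K+1>n/(24C)$. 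So $D=24C\,c_C$ works. The $n$ coming from the real-part bound must be cancelled by the $1/m$ in the primitive lemma; this is the role of the bounded quantity $nB_n/m$ in the paper.

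With this correction your argument is sound. Your intermediate radius $e^{-1/(2n)}$ replaces the paper's choice of applying the lemma on radius $1-b_n$ and then using maximum modulus and the Schwarz lemma on $|z|=e^{-1/n}$. It is an equivalent and slightly shorter way to keep $1-\theta$ bounded away from zero.
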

\begin{proof}
Write
\[
 b_n=16C\,2^{-\lfloor n/(4C)\rfloor},\quad
 R=1-b_n,\quad K=K_n,\quad m=K+1.
\]
Proposition~\ref{prop:radial} gives $|z_j|\ge R$ for every $j$. For sufficiently large $n$, we have $0<R\le1$. On $|z|<R$, set
\[
 T(z)=\frac{zp'(z)}{np(z)}=-\frac znG'(z).
\]
For $u=z/z_j$, we have $|u|<1$ and
\[
 \Re\frac{-u}{1-u}
 =\frac12-\frac{1-|u|^2}{2|1-u|^2}\le\frac12.
\]
Averaging this identity over the zeros gives
\begin{equation}\label{eq:real-T}
 \Re T(z)\le\frac12\qquad(|z|<R).
\end{equation}

Separate the low-order terms by writing
\[
 L(z)=-\frac1n\sum_{k=1}^K A_kz^k,\qquad H=T-L,
 \qquad P(z)=\sum_{k=1}^K\frac{A_k}{k}z^k.
\]
Then $H$ vanishes to order at least $m$ at zero and $z(G-P)'=-nH$. By~\eqref{eq:moments}, for $|z|\le1$,
\begin{equation}\label{eq:low-budgets}
 |L(z)|\le e_n:=\frac{96CK(K+1)}n(2/3)^K,
 \qquad |P(z)|\le E_n.
\end{equation}
The factor $K(K+1)$ in $e_n$ is a convenient non-sharp bound: the formal estimate bounds each of the $K+1$ summands indexed by $0\le k\le K$ by its value at $K$. We retain this slack throughout. Thus $\Re H\le\tfrac12+e_n$. Lemma~\ref{lem:high-bc} implies
\begin{equation}\label{eq:high-H}
 |H(z)|\le(1+2e_n)\frac{\theta(z)}{1-\theta(z)},
 \qquad\theta(z)=(|z|/R)^m.
\end{equation}

Take $r=e^{-1/n}$. Since $nb_n\to0$, eventually $r<R$. Put
\[
 \sigma_n=(r/R)^m,\qquad
 B_n=2e_n+(1+2e_n)\frac{\sigma_n}{1-\sigma_n}.
\]
On $|z|=r$, equation~\eqref{eq:high-H} and $|L|\le e_n$ first give
\[
 |T|\le e_n+(1+2e_n)\frac{\sigma_n}{1-\sigma_n},
 \qquad |H|=|T-L|\le B_n.
\]
The maximum modulus principle first gives $|H|\le B_n$ on the disk of radius $r$. Since $H$ vanishes to order at least $m$, the high-order Schwarz lemma then gives
\[
 |H(z)|\le B_n(|z|/r)^m\qquad(|z|\le r).
\]
Lemma~\ref{lem:primitive}, applied to $G-P$, now yields
\begin{equation}\label{eq:prelimit-log}
 |G(z)|\le E_n+\frac{nB_n}{m}(|z|/r)^m\qquad(|z|<r).
\end{equation}

We next verify that the constants can be chosen uniformly. Since $K/n\to1/(24C)$,
\[
 E_n\to0,\qquad e_n\to0,\qquad \frac nm\to24C.
\]
Also, $-\log(1-b_n)\le b_n/(1-b_n)$ gives $n\log R\to0$. Hence
\[
 \log\sigma_n=m\left(-\frac1n-\log R\right)
 \longrightarrow-\frac1{24C}.
\]
In particular, $\sigma_n\to e^{-1/(24C)}<1$, so $nB_n/m$ is eventually bounded by a positive constant $D$ depending only on $C$. None of these quantities depends on the zeros.

Finally, if $|z|=e^{-a/n}$ with $a>1$, then
\[
 (|z|/r)^m=e^{-m(a-1)/n}
 \le e^{-(a-1)/(24C)},
\]
because $m=K+1>n/(24C)$. Substitution in~\eqref{eq:prelimit-log} proves the proposition.
\end{proof}

The real-part bound~\eqref{eq:real-T} allows the reciprocal moments to enter the analytic estimate directly. The geometric input at this stage is the lower bound $R$ on the moduli of the zeros; their angular dependence is retained in the coefficients $A_k$.

\section{The first rescaling near the unit circle}\label{sec:first}
Fix $C>0$, and let $p_n\in\Pn$ satisfy $\rho(p_n)\le C/n$. The degrees may range over any sequence tending to infinity. We discard finitely many terms so that all estimates from the preceding section apply.

Choose the first zero to fix the direction, and normalize the constant term by setting
\begin{equation}\label{eq:canonical}
 \xi_n=\frac{z_{1,n}}{|z_{1,n}|},\qquad
 \omega_n=\frac{|p_n(0)|}{p_n(0)},\qquad
 Q_n(w)=\omega_np_n(\xi_ne^{w/n}).
\end{equation}
Radial stability makes all denominators nonzero. Write
\[
 \beta_n=\omega_np_n(0)=|p_n(0)|.
\]
By~\eqref{eq:radial}, $0<\beta_n\le1$ and $\beta_n\to1$. This normalization fixes both the angular coordinate and the limiting value at the left end.

\begin{lemma}[Exact finite-product comparisons]\label{lem:cosh}
Fix $t_{j,n}=n\arg(z_{j,n}/\xi_n)$, using the principal argument; this specifies the angle representative in the following identity. Write
\[
 z_{j,n}/\xi_n=r_{j,n}e^{it_{j,n}/n},\qquad
 s_{j,n}=n\log r_{j,n},\qquad
 S_n(w)=e^{-w/2}Q_n(w).
\]
There are numbers $\eta_n\ge0$ tending to zero such that $-\eta_n\le s_{j,n}\le0$. For all $x,y\in\R$,
\begin{equation}\label{eq:cosh}
 |S_n(x+iy)|^2=\prod_{j=1}^n
 \left\{2r_{j,n}\left[\cosh\frac{x-s_{j,n}}n-\cos\frac{y-t_{j,n}}n\right]\right\}.
\end{equation}
Consequently, if $a\ge0$ and $|x|\le a$, then
\begin{equation}\label{eq:shifted-strip}
 |S_n(x+iy)|\le|S_n(-a-2\eta_n+iy)|.
\end{equation}
If $x\ge0$, then also
\begin{equation}\label{eq:shifted-reflection}
 |S_n(-x+iy)|\le|S_n(x+iy)|
 \le|S_n(-x-2\eta_n+iy)|.
\end{equation}
\end{lemma}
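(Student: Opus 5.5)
The plan is a direct computation of $|Q_n|^2$ on the factored form, after which both inequalities follow from a factor-by-factor monotonicity argument. I first fix $\eta_n$. By Proposition~\ref{prop:radial}, every zero satisfies $R_n:=1-b_n\le r_{j,n}\le 1$, where $b_n=16C\,2^{-\lfloor n/(4C)\rfloor}$. Since $|\xi_n|=1$, we have $r_{j,n}=|z_{j,n}|$. Hence
\[
 -\eta_n\le s_{j,n}=n\log r_{j,n}\le0,
 \qquad \eta_n:=-n\log(1-b_n).
\]
As in the proof of Proposition~\ref{prop:uniform-log}, $-\log(1-b_n)\le b_n/(1-b_n)$ and $nb_n\to0$, so $\eta_n\to0$. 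This bound depends only on $C$, not on the configuration of zeros.

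Next I verify the identity~\eqref{eq:cosh}. Write
\[
 Q_n(w)=\omega_n\xi_n^n\prod_j\bigl(e^{w/n}-r_{j,n}e^{it_{j,n}/n}\bigr),
\]
and note $|\omega_n|=|\xi_n|=1$. For $w=x+iy$, $r=r_{j,n}=e^{s_{j,n}/n}$ and $t=t_{j,n}$, expand
\[
 \bigl|e^{w/n}-re^{it/n}\bigr|^2
 =e^{2x/n}+r^2-2re^{x/n}\cos\tfrac{y-t}n
 =2re^{x/n}\Bigl[\cosh\tfrac{x-s_{j,n}}n-\cos\tfrac{y-t}n\Bigr].
\]
The second equality uses
\[
 \tfrac12\bigl(e^{x/n}/r+r/e^{x/n}\bigr)=\cosh\bigl((x-s_{j,n})/n\bigr).
\]
Taking the product over $j$, the factors $e^{x/n}$ multiply to $e^{x}$. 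This cancels against $|e^{-w/2}|^2=e^{-x}$, which gives~\eqref{eq:cosh}.

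Both comparison inequalities follow from one observation. For fixed $y$, each bracketed factor in~\eqref{eq:cosh} is nonnegative, because $\cosh\ge1\ge\cos$. It is also a nondecreasing function of $|x-s_{j,n}|$, since $\cosh$ is even and increasing on $[0,\infty)$. Therefore, if $|x-s_{j,n}|\le|x'-s_{j,n}|$ for every $j$, then $|S_n(x+iy)|\le|S_n(x'+iy)|$, because a product of nonnegative factors is monotone factorwise. It remains to check the termwise distance inequalities, using $-\eta_n\le s_{j,n}\le0$.

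For~\eqref{eq:shifted-strip}, take $|x|\le a$ and $x'=-a-2\eta_n$. Then
\[
 |x-s_{j,n}|\le a+\eta_n\le a+2\eta_n+s_{j,n}=|x'-s_{j,n}|.
\]
For the first inequality of~\eqref{eq:shifted-reflection}, take $x\ge0$. Since $s_{j,n}\le0$,
\[
 |-x-s_{j,n}|=|x+s_{j,n}|\le x-s_{j,n}=|x-s_{j,n}|.
\]
For the second inequality, $s_{j,n}\ge-\eta_n$ gives
\[
 x-s_{j,n}\le x+\eta_n\le x+2\eta_n+s_{j,n}=|-x-2\eta_n-s_{j,n}|.
\]

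The only delicate point is bookkeeping. The argument representative must be the one fixed in the statement, so that the identity holds exactly with $t_{j,n}$. The uniform lower bound on $s_{j,n}$ comes from radial stability and not from any angular information. I do not expect a genuine obstacle.
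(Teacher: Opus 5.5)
Your proposal is correct and follows the paper's proof essentially step for step. It uses the same factorwise identity $|e^{w/n}-r_{j,n}e^{it_{j,n}/n}|^2=2r_{j,n}e^{x/n}\bigl[\cosh\frac{x-s_{j,n}}n-\cos\frac{y-t_{j,n}}n\bigr]$, the same cancellation of $e^{x}$ against $|e^{-w/2}|^2$, and the same termwise distance comparisons from $-\eta_n\le s_{j,n}\le0$. The only differences are immaterial: you choose $\eta_n=-n\log(1-b_n)$ instead of the paper's slightly larger $nb_n/(1-b_n)$, and your name $R_n$ for $1-b_n$ clashes with the paper's extremal inradius $R_n$.
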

\begin{proof}
With $b_n$ as in the preceding section, take $\eta_n=nb_n/(1-b_n)$. Since $r_{j,n}\ge1-b_n$,
\[
 0\le-n\log r_{j,n}\le-n\log(1-b_n)\le\eta_n\to0.
\]
For each factor, direct calculation gives
\[
 |e^{(x+iy)/n}-r_{j,n}e^{it_{j,n}/n}|^2
 =2r_{j,n}e^{x/n}
 \left[\cosh\frac{x-s_{j,n}}n-\cos\frac{y-t_{j,n}}n\right].
\]
After multiplication, $|e^{-w/2}|^2$ cancels the factor $e^x$, proving~\eqref{eq:cosh}. Each bracket is nonnegative and nondecreasing as a function of $|x-s_{j,n}|$.

For $s\in[-\eta_n,0]$ and $|x|\le a$,
\[
 |x-s|\le a+\eta_n\le|-a-2\eta_n-s|.
\]
Comparison factor by factor proves~\eqref{eq:shifted-strip}. For $x\ge0$,
\[
 |-x-s|\le|x-s|\le|-x-2\eta_n-s|,
\]
which similarly proves~\eqref{eq:shifted-reflection}.
\end{proof}

\begin{proposition}[The first limit and its left asymptotics]\label{prop:first-limit}
The sequence $Q_n$ has a subsequence converging locally uniformly on $\C$ to an entire function $F$ with
\begin{equation}\label{eq:F-zero}
 F(0)=0.
\end{equation}
More precisely, with $D=D(C)$ chosen in Proposition~\ref{prop:uniform-log}, every subsequential limit of the same normalized sequence satisfies
\begin{equation}\label{eq:limit-strip-explicit}
 |F(x+iy)|\le \mathcal B_C(R)
 :=e^{R/2}e^{(\max\{R,2\}+2)/2}e^{1+D}
 \qquad(|x|\le R,\ y\in\R,\ R\ge0).
\end{equation}
The bound depends on $C$ and $R$, and is independent of the chosen convergent subsequence. Moreover, there are constants $K_0,\delta>0$ such that
\begin{equation}\label{eq:left-amplitude}
 |F(-a+iy)-1|\le K_0e^{-\delta a}
 \qquad(a\ge2,\ y\in\R).
\end{equation}
One may take $\delta=1/(24C)$.
\end{proposition}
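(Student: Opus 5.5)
Our approach is to derive locally uniform bounds for $Q_n$ on vertical strips, apply Montel's theorem, and then pass the left-side estimate of Proposition~\ref{prop:uniform-log} to the limit.

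\textbf{Left half-plane.} For $a>1$ and $w=-a+iy$ we have $|\xi_ne^{w/n}|=e^{-a/n}$. The identity~\eqref{eq:potential-real} in analytic form, $p_n(z)=p_n(0)e^{-G(z)}$, gives
\[
 Q_n(w)=\beta_n e^{-G_n(\xi_ne^{w/n})}.
\]
Proposition~\ref{prop:uniform-log} then yields $|Q_n(-a+iy)|\le e^{E_n+D}$. It also gives
\[
 |Q_n(-a+iy)-\beta_n|\le\beta_n\bigl(e^{|G_n|}-1\bigr)\le e^{E_n+D}\bigl(E_n+De^{-(a-1)/(24C)}\bigr).
\]
Both bounds are uniform in $y$.

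\textbf{Right side.} The right side is handled by Lemma~\ref{lem:cosh}. For $|x|\le R$, apply~\eqref{eq:shifted-strip} with $a=\max\{R,2\}$. Since $|S_n(w)|=e^{-x/2}|Q_n(w)|$, this gives
\[
 |Q_n(x+iy)|\le e^{x/2}e^{(a+2\eta_n)/2}\,|Q_n(-a-2\eta_n+iy)|.
\]
The last factor is bounded by $e^{E_n+D}$ from the previous step, because $a+2\eta_n>1$. Eventually $\eta_n\le1$ and $E_n\le1$, so the right-hand side is at most $\mathcal B_C(R)$. This accounts for the explicit form of~\eqref{eq:limit-strip-explicit}: the factor $e^{R/2}$ comes from $e^{x/2}$, the factor $e^{(\max\{R,2\}+2)/2}$ absorbs $\eta_n\le1$, and $e^{1+D}$ bounds $e^{E_n+D}$.

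\textbf{Compactness.} Every compact set lies in some strip, so $(Q_n)$ is locally uniformly bounded. Montel's theorem gives a subsequence converging locally uniformly to an entire $F$. Any subsequential limit inherits~\eqref{eq:limit-strip-explicit} pointwise, because the bound holds for all large $n$ with constants that do not depend on the subsequence.

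\textbf{Value at the origin.} We have $Q_n(0)=\omega_np_n(\xi_n)$. Since $|\xi_n-z_{1,n}|=1-|z_{1,n}|\le b_n$, we get
\[
 |p_n(\xi_n)|\le b_n\prod_{j\ge2}|\xi_n-z_{j,n}|\le b_n2^{n-1}.
\]
This is useless, since $b_n\sim 2^{-n/(4C)}$ only beats $2^{n}$ when $C<1/4$. We therefore argue via the cosh form instead. By~\eqref{eq:cosh}, $|S_n(0)|^2$ contains the $j=1$ factor
\[
 2r_1\bigl[\cosh(s_1/n)-1\bigr]=O\bigl(\eta_n^2/n^2\bigr),
\]
because $t_{1,n}=0$. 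Every other factor is at most $2(\cosh(\eta_n/n)+1)$, which is still exponential. A direct product bound is therefore too crude.

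The fix is a local argument. By~\eqref{eq:shifted-strip}, $|S_n|$ is uniformly bounded on the disk $|w|\le1$, say by $B$. On that disk we have
\[
 |S_n(w)|^2\le|S_n(w)|^2\cdot\frac{\text{(factor }1\text{ at }w)}{\text{(factor }1\text{ at }w)}.
\]
More cleanly, $S_n(w)/(e^{w/n}-z_{1,n}/\xi_n)$ is holomorphic on the disk, and on $|w|=1$ its modulus is at most $B/|e^{w/n}-r_1|\lesssim Bn$. The maximum principle then gives
\[
 |S_n(0)|\lesssim Bn\,|1-r_{1,n}|\le Bnb_n\to0.
\]
Hence $F(0)=0$.

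\textbf{Left asymptotics.} Let $n\to\infty$ in the left-half-plane estimate. Since $\beta_n\to1$ and $E_n\to0$,
\[
 |F(-a+iy)-1|\le De^{D}e^{1/(24C)}e^{-a/(24C)}.
\]
This gives~\eqref{eq:left-amplitude} with $\delta=1/(24C)$ and $K_0=De^{D+1/(24C)}$, valid for all $a\ge2$.

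The main obstacle is the step $F(0)=0$: one has to avoid crude product bounds and use the division by the factor coming from the first zero.
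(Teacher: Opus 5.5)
Your proof is correct. Except for the step $F(0)=0$, it is the paper's argument: the same representation $Q_n(-a+iy)=\beta_ne^{-G_n(z)}$ combined with Proposition~\ref{prop:uniform-log}, the same strip bound from \eqref{eq:shifted-strip} at $a=\max\{R,2\}$ with $\eta_n\le1$ and $E_n\le1$, Montel's theorem, and the same limit of $|Q_n(-a+iy)-\beta_n|$ yielding $K_0=De^{D}e^{1/(24C)}$.

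For $F(0)=0$ you take a different route. You divide $S_n$ by its linear factor $e^{w/n}-r_{1,n}$. Using $|e^{w/n}-1|\ge 1/n-O(n^{-2})$ and $1-r_{1,n}\le b_n$, you bound the quotient on $|w|=1$ by $O(n)$. The maximum principle then gives $|Q_n(0)|=O(nb_n)\to0$. This is valid, and it even gives a rate at finite degree.

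The paper's argument is much shorter. Because $\xi_n=z_{1,n}/|z_{1,n}|$, the first zero corresponds in the $w$-coordinate to the real point $s_n=n\log|z_{1,n}|$. So $Q_n(s_n)=0$ exactly, with $|s_n|\le\eta_n\to0$. Local uniform convergence and continuity of $F$ then give $F(0)=\lim Q_n(s_n)=0$.

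So the step you identified as the main obstacle is not really one. Your division argument is a quantitative version of the same fact, essentially a Schwarz-lemma bound around the zero $s_n$. In a final write-up, drop the false starts (the crude product bound and the identity $|S_n|^2\le|S_n|^2\cdot(\cdot)/(\cdot)$, which proves nothing).
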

\begin{proof}
We first obtain uniform bounds before taking a limit. Set $z=\xi_ne^{(-a+iy)/n}$. Then $|z|=e^{-a/n}$, and~\eqref{eq:potential-real} gives
\begin{equation}\label{eq:finite-factor}
 Q_n(-a+iy)=\beta_n e^{-G_n(z)}.
\end{equation}
For sufficiently large $n$, we have $E_n\le1$. Proposition~\ref{prop:uniform-log} therefore yields
\begin{equation}\label{eq:finite-left-bound}
 |Q_n(-a+iy)|\le e^{1+D}\qquad(a\ge2,\ y\in\R).
\end{equation}

Fix $R\ge0$, and put $a=\max\{R,2\}$. Increase the degree threshold so that $\eta_n\le1$. If $|x|\le R$, Lemma~\ref{lem:cosh} gives
\begin{align}
 |Q_n(x+iy)|
 &\le e^{x/2}|S_n(-a-2\eta_n+iy)|\notag\\
 &\le e^{R/2}e^{(a+2)/2}e^{1+D}.
 \label{eq:finite-strip-bound}
\end{align}
The bound is independent of $n$ and $y$. The remaining finitely many entire functions are bounded on every compact set, so $\{Q_n\}$ is locally bounded. Montel's theorem gives a subsequence, still denoted by $Q_n$, converging locally uniformly to an entire function $F$. Taking a pointwise limit in~\eqref{eq:finite-strip-bound} proves~\eqref{eq:limit-strip-explicit}. The estimate preceding the limit is independent of the subsequence, so the same bound holds for each subsequential limit.

Let $s_n=n\log|z_{1,n}|$. The normalization~\eqref{eq:canonical} gives $Q_n(s_n)=0$, while $|s_n|\le\eta_n\to0$. Local uniform convergence on a fixed disk containing these points, together with continuity of $F$, gives $F(0)=0$.

Finally, fix $a\ge2$ and $y\in\R$, and write
\[
 B(a)=De^{-(a-1)/(24C)}.
\]
Using~\eqref{eq:finite-factor}, $\beta_n\le1$, and $|e^u-1|\le|u|e^{|u|}$, we find
\[
 |Q_n(-a+iy)-\beta_n|
 \le(E_n+B(a))e^{E_n+B(a)}.
\]
Pass to the limit along the same subsequence to obtain
\[
 |F(-a+iy)-1|\le B(a)e^{B(a)}.
\]
The point $y$ was arbitrary and the bound does not depend on it, so the estimate holds uniformly on the entire line. This argument does not interchange a supremum and a limit. Since $B(a)\le D$, the choice
$K_0=De^D e^{1/(24C)}$ proves~\eqref{eq:left-amplitude}.
\end{proof}

\begin{proposition}[Exact reflection in the limit]\label{prop:reflection}
The limit function satisfies
\begin{equation}\label{eq:reflection}
 |F(x+iy)|=e^x|F(-x+iy)|\qquad(x,y\in\R).
\end{equation}
\end{proposition}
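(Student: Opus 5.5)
Equivalently, with $S(w)=e^{-w/2}F(w)$ (the limit of $S_n=e^{-w/2}Q_n$ along the chosen subsequence), the claim is $|S(x+iy)|=|S(-x+iy)|$. The plan is to obtain this by passing to the limit in the two-sided comparison~\eqref{eq:shifted-reflection} of Lemma~\ref{lem:cosh}. That comparison is exact at finite degree up to a horizontal shift of size $2\eta_n\to0$.

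First, I would record that $S_n\to S$ locally uniformly on $\C$, since $Q_n\to F$ locally uniformly and $e^{-w/2}$ is a fixed entire function. Next, I fix $x\ge0$ and $y\in\R$. Lemma~\ref{lem:cosh} gives, for every $n$ in the subsequence,
\[
 |S_n(-x+iy)|\le|S_n(x+iy)|\le|S_n(-x-2\eta_n+iy)|.
\]
The three evaluation points lie in a fixed compact set, say $\overline B(0,|x|+|y|+2)$ once $\eta_n\le1$.

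Then I pass to the limit. The left and middle terms converge to $|S(-x+iy)|$ and $|S(x+iy)|$. For the right term, I would write
\[
 |S_n(-x-2\eta_n+iy)-S(-x+iy)|\le\sup_{K}|S_n-S|+|S(-x-2\eta_n+iy)-S(-x+iy)|,
\]
where $K$ is the compact set above. The first summand tends to zero by local uniform convergence, and the second by continuity of $S$ together with $\eta_n\to0$. Hence the right-hand side also tends to $|S(-x+iy)|$, and squeezing yields $|S(x+iy)|=|S(-x+iy)|$ for $x\ge0$.

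For $x<0$, I apply the same identity with $-x\ge0$, which gives the same equality with the roles of $x$ and $-x$ swapped. Finally, I multiply by $|e^{w/2}|=e^{x/2}$ on each side:
\[
 |F(x+iy)|=e^{x/2}|S(x+iy)|=e^{x/2}|S(-x+iy)|=e^{x/2}e^{x/2}|F(-x+iy)|,
\]
which is~\eqref{eq:reflection}.

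The only delicate point is the moving evaluation point $-x-2\eta_n+iy$ in the upper bound. This is handled by equicontinuity, or equivalently by uniform convergence on a compact neighborhood combined with continuity of the limit, so I expect no real obstacle. Everything substantive is already contained in the factorwise monotonicity of~\eqref{eq:cosh} and the radial bound $-\eta_n\le s_{j,n}\le0$.
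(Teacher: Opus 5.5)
Your proposal is correct and follows essentially the same route as the paper: you set $S=e^{-w/2}F$ and pass to the limit in \eqref{eq:shifted-reflection}, controlling the moving point $-x-2\eta_n+iy$ through local uniform convergence on a fixed compact disk plus continuity of $S$. You then use the symmetry $x\mapsto -x$ and restore the exponential factors, exactly as in the paper's proof.
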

\begin{proof}
Set $S(w)=e^{-w/2}F(w)$. Then $S_n\to S$ locally uniformly. Fix $x\ge0$ and $y\in\R$. The moving point $-x-2\eta_n+iy$ tends to $-x+iy$ and eventually lies in a fixed compact disk. Passing to the limit in~\eqref{eq:shifted-reflection} gives
\[
 |S(-x+iy)|\le|S(x+iy)|\le|S(-x+iy)|.
\]
Thus $|S(x+iy)|=|S(-x+iy)|$. Replacing $x$ by $-x$ covers negative $x$. Multiplication by the exponential factors gives~\eqref{eq:reflection}.
\end{proof}

Equation~\eqref{eq:F-zero} shows that $F$ is not identically equal to $1$. The exact reflection identity is a property of the limit. At finite degree, the shifted comparisons in Lemma~\ref{lem:cosh} are essential to the compactness argument.

\begin{lemma}[Transfer of strict sublevel disks]\label{lem:transfer}
Suppose that $Q_n$ is given by~\eqref{eq:canonical} and $Q_n\to F$ locally uniformly. If $r\ge0$ and
$\overline B(c,r)\subset\{w:|F(w)|<1\}$, then
\[
 \liminf_{n\to\infty}n\rho(p_n)\ge r.
\]
\end{lemma}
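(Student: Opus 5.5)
The plan is to pull the disk back to the $z$-plane through the map $w\mapsto\xi_ne^{w/n}$. Near the origin this map is approximately a dilation by $1/n$, so a disk of radius $r$ in the $w$-plane should correspond to a disk of radius about $r/n$ in the $z$-plane.

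We may assume $r>0$, since $\rho\ge0$ makes the case $r=0$ trivial. The closed disk $\overline B(c,r)$ is compact and $|F|$ is continuous, so there is $\kappa<1$ with $|F|\le\kappa$ on $\overline B(c,r)$. Also, the set $\{|F|<1\}$ is open and contains this compact disk, so for some small $\epsilon>0$ we have $|F|<(1+\kappa)/2$ on $\overline B(c,r+\epsilon)$. Local uniform convergence then gives, for all large $n$,
\[
 |Q_n(w)|<1\qquad(w\in\overline B(c,r+\epsilon)).
\]
Because $|\omega_n|=1$, this is the same as $|p_n(\xi_ne^{w/n})|<1$ on that disk. Hence the image set $\Phi_n(\overline B(c,r+\epsilon))$ lies in $\Om_{p_n}$, where $\Phi_n(w)=\xi_ne^{w/n}$.

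It remains to show that this image contains a Euclidean disk $B(\Phi_n(c),r_n)$ with $\liminf nr_n\ge r+\epsilon/2>r$. The map $\Phi_n$ is injective on the disk once $n$ is large, since the imaginary parts there vary by less than $2\pi n$. Its inverse near $\zeta_n=\Phi_n(c)$ is $z\mapsto n\log(z/\xi_n)$ with the branch taking the value $c$ at $\zeta_n$.

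For $|z-\zeta_n|\le s/n$ with $s=r+\epsilon/2$, write $z=\zeta_n(1+u)$. Here $|u|\le s/(n|\zeta_n|)$, and $|\zeta_n|=e^{\Re c/n}\to1$. Then
\[
 |n\log(1+u)|\le n|u|(1+O(|u|))=\frac{s}{|\zeta_n|}\bigl(1+O(1/n)\bigr),
\]
which is below $r+\epsilon$ for large $n$. The preimage $w=c+n\log(1+u)$ therefore lies in $\overline B(c,r+\epsilon)$, and $z=\Phi_n(w)$ with $|p_n(z)|<1$.

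This gives $B(\zeta_n,s/n)\subset\Om_{p_n}$, so $n\rho(p_n)\ge s>r$ for all large $n$, and hence $\liminf_n n\rho(p_n)\ge r$.

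The only delicate step is the containment of the Euclidean disk in the image. That is where the margin $\epsilon$ is spent, absorbing both the factor $|\zeta_n|^{-1}$ and the quadratic error of the logarithm. Both are uniform because $c$ and $r$ are fixed.
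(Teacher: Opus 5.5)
Your proof is correct, and it rests on the same fact as the paper's: the exponential chart $w\mapsto\xi_ne^{w/n}$ and the affine chart $w\mapsto\xi_n(1+w/n)$ differ by the change of variables $\psi_n(w)=n\log(1+w/n)$, which tends to the identity locally uniformly. The two proofs spend this approximation in different places.

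The paper puts it on the function side. It shows that $Q_n(\psi_n(w))=\omega_np_n(\xi_n(1+w/n))\to F(w)$ locally uniformly, using uniform continuity of $F$ on a larger compact set. Since the affine chart carries $\overline B(c,r)$ exactly onto a closed disk of radius $r/n$, no enlargement of the disk is needed.

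You put it on the geometric side. You use $Q_n\to F$ directly on the enlarged disk $\overline B(c,r+\epsilon)$. The explicit preimage $w=c+n\log(1+u)$ then shows that $B(\xi_ne^{c/n},(r+\epsilon/2)/n)$ lies in the image. The margin $\epsilon$ absorbs both the factor $|\zeta_n|^{-1}$ and the quadratic remainder of the logarithm.

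The paper's version is slightly more economical: there is no margin, and the transferred disk has radius exactly $r/n$. Yours avoids composing the convergent sequence with a moving change of variables, and it gives the marginally stronger conclusion $n\rho(p_n)\ge r+\epsilon/2$ for all large $n$.

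Two small points about your write-up:
\begin{itemize}
\item The specific bound $|F|<(1+\kappa)/2$ on the enlarged disk comes from continuity of $F$, not from openness of $\{|F|<1\}$ alone. Alternatively, openness plus compactness gives some bound $\kappa'<1$ on a slightly larger closed disk, and that suffices.
\item Injectivity of $\Phi_n$ is never used. You only need a preimage of each point of the Euclidean disk.
\end{itemize}
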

\begin{proof}
On each fixed compact set, for sufficiently large $n$, define
\[
 \psi_n(w)=n\log(1+w/n),
\]
using the branch of the logarithm given by its power series. For $2|w|<n$, the logarithm remainder estimate gives
\[
 |\psi_n(w)-w|\le|w|^2/n.
\]
Hence $\psi_n\to\mathrm{id}$ locally uniformly, and the exact identity
\[
 Q_n(\psi_n(w))=\omega_np_n\bigl(\xi_n(1+w/n)\bigr)
\]
holds. The images of a fixed compact set under $\psi_n$ eventually lie in another fixed compact set. Local uniform convergence of $Q_n$ and uniform continuity of $F$ on that larger set therefore show that the right-hand side converges locally uniformly to $F(w)$.

Compactness of $\overline B(c,r)$ gives a number $\tau<1$ with $|F|\le\tau$ on the disk. For sufficiently large $n$, the affinely rescaled polynomial also has modulus strictly less than $1$ throughout that disk. The map $w\mapsto\xi_n(1+w/n)$ sends it to a closed disk of radius $r/n$. Since $|\omega_n|=|\xi_n|=1$, we obtain $\rho(p_n)\ge r/n$. For $r=0$, the conclusion follows directly from $\rho(p_n)\ge0$.
\end{proof}

\section{The second rescaling: an exponential tangent}\label{sec:tangent}
We now work with the entire function obtained in the preceding section. The argument below shows how the factor $e^x$ in the reflection identity determines the limiting disk radius.

\begin{lemma}[The left slope of a convex function]\label{lem:slope}
Let $\phi:\R\to\R$ be a finite convex function, and let $\delta>0$. Suppose that
\[
 \phi(x)\le\delta x+O(1)\quad(x\to-\infty),\qquad
 \phi(x)\le x+O(1)\quad(x\to+\infty).
\]
There is $\lambda\in[\delta,1]$ such that, for every fixed $t\in\R$,
\begin{equation}\label{eq:increment}
 \phi(x+t)-\phi(x)\longrightarrow\lambda t
 \qquad(x\to-\infty).
\end{equation}
\end{lemma}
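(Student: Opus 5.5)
The plan is to use the monotonicity of difference quotients of a convex function. For a finite convex $\phi$ on $\R$, the right derivative $\phi'_+$ exists everywhere and is nondecreasing. Set
\[
 \lambda=\lim_{x\to-\infty}\phi'_+(x)=\inf_x\phi'_+(x)\in[-\infty,\infty).
\]
The first task is to show $\lambda\in[\delta,1]$. The second is to deduce \eqref{eq:increment} from convexity alone.

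\emph{Lower bound $\lambda\ge\delta$.} Fix a base point $x_0$. For $x<x_0$, convexity gives
\[
 \phi(x)\ge\phi(x_0)+\lambda_x(x-x_0)
\]
for any slope $\lambda_x$ between the one-sided derivatives at points to the right of $x$. More directly, the chord slope $(\phi(x_0)-\phi(x))/(x_0-x)$ is nonincreasing as $x\to-\infty$ and converges to $\lambda$. The hypothesis $\phi(x)\le\delta x+c$ gives
\[
 \frac{\phi(x_0)-\phi(x)}{x_0-x}\ge\frac{\phi(x_0)-c-\delta x}{x_0-x}\longrightarrow\delta
 \qquad(x\to-\infty).
\]
Hence $\lambda\ge\delta$, and in particular $\lambda$ is finite.

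\emph{Upper bound $\lambda\le1$.} Every chord slope dominates $\lambda$, since the chord slope on $[x_0,x]$ is at least $\phi'_+(x_0)\ge\lambda$. For $x\to+\infty$ the hypothesis $\phi(x)\le x+c'$ gives
\[
 \lambda\le\frac{\phi(x)-\phi(x_0)}{x-x_0}\le\frac{x+c'-\phi(x_0)}{x-x_0}\to1 .
\]
So $\lambda\le1$.

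\emph{The increment limit.} Fix $t>0$; the case $t<0$ follows by replacing $x$ with $x+t$, and $t=0$ is trivial. By convexity, the chord slope on $[x,x+t]$ lies between $\phi'_+(x)$ and $\phi'_-(x+t)\le\phi'_+(x+t)$. Both bounds tend to $\lambda$ as $x\to-\infty$, because $\phi'_+$ is monotone with infimum $\lambda$. Therefore $(\phi(x+t)-\phi(x))/t\to\lambda$.

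The only mildly delicate step is the lower bound. It requires passing from the pointwise upper asymptotics as $x\to-\infty$ to a lower bound on the slope, and this works precisely because chord slopes from a fixed point are monotone. One must also rule out $\lambda=-\infty$, and that is already settled by the lower bound. Everything else is standard one-variable convexity, so no earlier result of the paper is needed.
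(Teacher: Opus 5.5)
Your proof is correct, but it takes a different route from the paper. You set $\lambda=\inf_x\phi'_+(x)$ and obtain the increment limit from the sandwich
$\phi'_+(x)\le(\phi(x+t)-\phi(x))/t\le\phi'_+(x+t)$. Both outer terms tend to $\lambda$ because $\phi'_+$ is monotone.

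The paper never uses one-sided derivatives. It defines $\lambda$ as the limit of the chord slopes $a(x)=(\phi(x_0)-\phi(x))/(x_0-x)$ from a fixed anchor, after showing these are monotone and bounded below. It then proves $\delta\le\lambda\le1$ by contradiction. Finally it squeezes the increment quotient between $a(x)$ and the ``mirror-secant'' slope $2a(2x-x_0)-a(x)$ of the chord over $[2x-x_0,x]$; both bounds tend to $\lambda$.

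Your version is shorter and more conventional, but it relies on the existence and monotonicity of one-sided derivatives. The paper's version uses only the three-chord inequality, which keeps it self-contained and convenient for the Lean formalization.

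One step in your lower bound needs a line of justification. You assert that the chord slopes from $x_0$ converge to $\inf\phi'_+$. The easy inequality $a(x)\ge\phi'_+(x)$ gives only $\lim a\ge\lambda$, which is the wrong direction for deducing $\lambda\ge\delta$. The needed direction follows from the supporting-line inequality $\phi(x)\ge\phi(y)+\phi'_+(y)(x-y)$, valid for all $x$.

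In fact you can skip the chord-slope detour entirely. Combine that inequality with $\phi(x)\le\delta x+c$, divide by $x<0$, and let $x\to-\infty$. This gives $\phi'_+(y)\ge\delta$ for every $y$, and hence $\lambda\ge\delta$. It also replaces your garbled first sentence about ``slopes between the one-sided derivatives at points to the right of $x$.''
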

\begin{proof}
Fix an anchor $x_0\in\R$, and write
\[
 a(x)=\frac{\phi(x_0)-\phi(x)}{x_0-x}\quad(x<x_0),\qquad
 \widetilde a(x)=a(\min\{x,x_0-1\}).
\]
Convexity makes $\widetilde a$ nondecreasing. Choose constants $X,B$ so that $\phi(x)\le\delta x+B$ for $x\le X$. If
\[
 T=\min\{X,x_0-1,\phi(x_0)-B-(\delta-1)x_0\},
\]
then, for $x\le T$,
\[
 a(x)\ge\frac{\phi(x_0)-\delta x-B}{x_0-x}\ge\delta-1.
\]
Monotonicity gives the same lower bound for $\widetilde a(x)$ when $x\ge T$. Thus $\widetilde a$ has a finite limit $\lambda$ at $-\infty$, and so does $a$.

We check the two bounds on $\lambda$. If $\lambda<\delta$, choose $\lambda<\mu<\delta$. Convergence gives $a(x)<\mu$ for all sufficiently negative $x$. But the left affine bound gives $a(x)>\mu$ whenever, in addition,
\[
 x<\frac{\phi(x_0)-B-\mu x_0}{\delta-\mu},
\]
a contradiction. Hence $\delta\le\lambda$.

For the upper bound, write $\phi(t)\le t+B_+$ for $t\ge X_+$. If $s=a(x)>1$ at some $x<x_0$, choose
\[
 t>\max\left\{X_+,x_0,
      \frac{B_+-\phi(x_0)+s x_0}{s-1}\right\}.
\]
The right affine bound gives $(\phi(t)-\phi(x_0))/(t-x_0)<s$, whereas convexity gives the opposite weak inequality. Thus $a(x)\le1$ for $x<x_0$, and $\lambda\le1$.

For $t>0$ and $x+t<x_0$, convexity now yields the mirror-secant squeeze
\[
 \frac{\phi(x)-\phi(2x-x_0)}{x_0-x}
 \le\frac{\phi(x+t)-\phi(x)}t\le a(x).
\]
The left member is exactly $2a(2x-x_0)-a(x)$, which tends to $\lambda$. The middle member therefore tends to $\lambda$. Multiplying by $t$ proves~\eqref{eq:increment} for $t>0$. For $t<0$, apply the positive-increment result at $x+t$ with increment $-t$ and negate; the case $t=0$ is immediate.
\end{proof}

\Needspace{13\baselineskip}
\begin{theorem}[Exponential tangents and sublevel disks]\label{thm:tangent}
Let $F$ be an entire function not identically equal to $1$, satisfying the following conditions:
\begin{enumerate}[label=\textup{(\roman*)}]
\item For each $R>0$, the function $F$ is bounded on the full strip $|\Re w|\le R$.
\item There are $K_0,\delta>0$ such that $|F(x+iy)-1|\le K_0e^{\delta x}$ for all sufficiently negative $x$ and all $y\in\R$.
\item For all $x,y\in\R$, one has $|F(x+iy)|=e^x|F(-x+iy)|$.
\end{enumerate}
Then, for every $0\le r<\pi/2$, there exists $c\in\C$ such that
\begin{equation}\label{eq:limit-disks}
 \overline B(c,r)\subset\{w:|F(w)|<1\}.
\end{equation}
\end{theorem}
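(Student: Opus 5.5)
The plan is to study the logarithm of the vertical supremum of $F-1$, extract a linear slope from it with Lemma~\ref{lem:slope}, and then rescale far to the left so that $F-1$ becomes a small multiple of a pure exponential. For $x\in\R$ put
\[
 M(x)=\sup_{y\in\R}|F(x+iy)-1|,\qquad \phi(x)=\log M(x).
\]
\begin{enumerate}
\item \emph{$\phi$ is finite.} By (i), $M(x)<\infty$. If $M(x_1)=0$ for some $x_1$, then $F\equiv1$ on a vertical line, hence everywhere by the identity theorem; this is excluded. So $\phi$ is real-valued.
\item \emph{$\phi$ is convex.} Since $F-1$ is bounded on every closed vertical strip by (i), Hadamard's three-lines theorem applies on each strip $x_1\le\Re w\le x_2$. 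It shows that $\log M$ is a convex function of $x$.
\item \emph{Left asymptotics.} Hypothesis (ii) gives $\phi(x)\le\delta x+\log K_0$ as $x\to-\infty$.
\item \emph{Right asymptotics.} For $x\to+\infty$, the reflection identity (iii) together with (ii) applied at $-x$ gives
\[
 |F(x+iy)|=e^x|F(-x+iy)|\le e^x(1+K_0e^{-\delta x}).
\]
Hence $M(x)\le e^x+K_0e^{(1-\delta)x}+1$, and so $\phi(x)\le x+O(1)$.
\item \emph{Slope.} Lemma~\ref{lem:slope} now yields $\lambda\in[\delta,1]$ with $\phi(x+t)-\phi(x)\to\lambda t$ as $x\to-\infty$, for every fixed $t$.
\end{enumerate}

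\emph{Second rescaling.} Choose $x_k\to-\infty$ and $y_k\in\R$ with $|F(x_k+iy_k)-1|\ge\tfrac12M(x_k)$ (in fact nearly maximal). Set
\[
 g_k(w)=\frac{F(x_k+iy_k+w)-1}{M(x_k)}.
\]
Then $|g_k(t+is)|\le M(x_k+t)/M(x_k)=e^{\phi(x_k+t)-\phi(x_k)}$. This is bounded on compacts, uniformly in $k$, because increments of a convex function over bounded intervals are controlled by the convergent increments at the endpoints. By Montel, a subsequence converges locally uniformly to an entire $g$ with $|g(t+is)|\le e^{\lambda t}$ and $|g(0)|\ge\tfrac12$ (or $=1$ if the $y_k$ are chosen asymptotically maximizing). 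Then $g(w)e^{-\lambda w}$ is entire and bounded by $1$, so by Liouville it is a constant $u$ with $0<|u|\le1$. Hence $g(w)=ue^{\lambda w}$: this is the exponential tangent.

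\emph{Disks.} Write $u=|u|e^{i\alpha}$. On $\{w=t+is\}$ we have $\Re g(w)=|u|e^{\lambda t}\cos(\lambda s+\alpha)$, which is negative on horizontal strips of height $\pi/\lambda\ge\pi$. Given $r<\pi/2$, take $s_0$ with $\lambda s_0+\alpha=\pi$ and $c_0=is_0$. On $\overline B(c_0,r)$ we have $|\lambda s+\alpha-\pi|\le\lambda r\le r<\pi/2$, so $\Re g\le-\kappa<0$ there for some $\kappa>0$ depending on $r$. By local uniform convergence, $\Re g_k\le-\kappa/2$ and $|g_k|\le A$ on the disk for large $k$, with $A$ independent of $k$. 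Therefore
\[
 |F|^2=|1+M(x_k)g_k|^2\le1-\kappa M(x_k)+A^2M(x_k)^2<1,
\]
since $M(x_k)\to0$ by (ii). Taking $c=x_k+iy_k+c_0$ gives~\eqref{eq:limit-disks}. This is exactly where $\lambda\le1$, and hence the factor $e^x$ in (iii), produces the constant $\pi/2$.

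The main obstacle is the normal-family step. The bound on $|g_k|$ must be uniform on compacts, which needs the increment convergence of Lemma~\ref{lem:slope} to be locally uniform in $t$; I would get this from monotonicity of convex difference quotients. The limit $g$ must also be nonzero with the exact bound $e^{\lambda t}$. The choice of near-maximizing $y_k$ handles nondegeneracy. The Liouville step only needs $|g|\le e^{\lambda\Re w}$, not equality of suprema, so approximate maximizers suffice.
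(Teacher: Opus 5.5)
Your proposal is correct and follows essentially the same route as the paper. Both arguments take three-lines convexity of $\log\sup_y|F-1|$, use Lemma~\ref{lem:slope} to extract a slope $\lambda\in[\delta,1]$, rescale at near-maximizing points to obtain a pure exponential tangent, and then use a strict negative margin $\Re g\le-\kappa$ on a closed disk of radius $r<\pi/2$ to force $|F|<1$. The differences are cosmetic: you apply Liouville with $|g(0)|\ge\tfrac12$ where the paper takes $|H(0)|=1$ and uses the maximum modulus principle, and the local boundedness you flag as the main obstacle needs only the two endpoint increments of the convex function (as you also note), not uniformity in $t$.
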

\begin{proof}
Put
\[
 h=F-1,\qquad M(x)=\sup_{y\in\R}|h(x+iy)|,
 \qquad\phi(x)=\log M(x).
\]
Condition~(i) gives $M(x)<\infty$. If $M(x)=0$ for some $x$, then $h(x+iy)=0$ for every real $y$. These zeros accumulate at the finite point $x$: take $x+it$ with real $t\ne0$ tending to zero. The identity theorem therefore gives $h\equiv0$, hence $F\equiv1$, a contradiction. Hence $M(x)>0$, and $\phi$ is finite everywhere.

\emph{Three-lines convexity.}
Fix $x_1<x_2$. The function $h$ is bounded on the full closed strip $x_1\le\Re w\le x_2$, so Hadamard's three-lines theorem gives
\[
 M(x)\le M(x_1)^{(x_2-x)/(x_2-x_1)}
              M(x_2)^{(x-x_1)/(x_2-x_1)}
 \quad(x_1<x<x_2).
\]
Taking logarithms shows that $\phi$ is convex. Condition~(ii) gives $\phi(x)\le\delta x+O(1)$ at the left end. For sufficiently large positive $x$, conditions~(ii) and~(iii) imply
\[
 |h(x+iy)|\le|F(x+iy)|+1
 \le e^x(1+K_0e^{-\delta x})+1\le(K_0+2)e^x.
\]
Thus $\phi(x)\le x+O(1)$ at the right end. Lemma~\ref{lem:slope} gives a number
\begin{equation}\label{eq:lambda}
 0<\delta\le\lambda\le1
\end{equation}
for which~\eqref{eq:increment} holds.

\emph{A nonzero tangent.}
Choose $x_j=-(j+1)$ and $\epsilon_j=1/(j+2)$ for $j\ge0$. Although the supremum defining $M(x_j)$ need not be attained, we may select $y_j\in\R$ such that
\[
 (1-\epsilon_j)M(x_j)\le|h(x_j+iy_j)|\le M(x_j),
 \qquad 0<\epsilon_j<1,\quad\epsilon_j\to0.
\]
Set $m_j=M(x_j)$ and
\begin{equation}\label{eq:tangent-sequence}
 H_j(w)=\frac{h(x_j+iy_j+w)}{m_j}.
\end{equation}
If $u=\Re w$, then
\begin{equation}\label{eq:tangent-majorant}
 |H_j(w)|\le\exp\{\phi(x_j+u)-\phi(x_j)\}.
\end{equation}
To check local boundedness, fix $R>0$. The convex function
$u\mapsto\phi(x_j+u)-\phi(x_j)$ is bounded above on $[-R,R]$ by the larger of its two endpoint values. By~\eqref{eq:increment}, these endpoint values tend to $-\lambda R$ and $\lambda R$, respectively. Thus $\{H_j\}$ is locally bounded. After passing to a subsequence, Montel's theorem gives local uniform convergence $H_j\to H$ on $\C$.

At zero, the choice of $y_j$ implies $|H(0)|=1$. Passing to the limit in~\eqref{eq:tangent-majorant} gives
\[
 |H(w)|\le e^{\lambda\Re w}\qquad(w\in\C).
\]
The entire function $e^{-\lambda w}H(w)$ has modulus at most $1$ and attains that value at zero. The maximum modulus principle yields
\begin{equation}\label{eq:pure-exp}
 H(w)=ce^{\lambda w},\qquad |c|=1.
\end{equation}

\emph{A strict negative margin.}
Since $\lambda>0$, choose $\tau\in\R$ so that $ce^{i\lambda\tau}=-1$. Fix $0\le r<\pi/2$. From $\lambda\le1$, for all $|z|\le r$ we have
\begin{equation}\label{eq:negative-margin}
 \Re H(i\tau+z)
 =-e^{\lambda\Re z}\cos(\lambda\Im z)
 \le-e^{-\lambda r}\cos(\lambda r)=:-\eta_r<0.
\end{equation}
Local uniform convergence implies that, for sufficiently large $j$, throughout $|z|\le r$,
\[
 \Re H_j(i\tau+z)\le-\eta_r/2,\qquad
 |H_j(i\tau+z)|\le B_r,
\]
where $B_r$ is independent of $j$. Condition~(ii) gives $m_j\to0$, and $m_j>0$. Using the exact identity~\eqref{eq:tangent-sequence}, we obtain
\begin{align*}
 |F(x_j+i(y_j+\tau)+z)|^2
 &=|1+m_jH_j(i\tau+z)|^2\\
 &\le1-\eta_rm_j+B_r^2m_j^2<1
\end{align*}
for all sufficiently large $j$, uniformly on $|z|\le r$. This proves~\eqref{eq:limit-disks}.
\end{proof}

\begin{figure}[htbp]
\centering\begin{tikzpicture}[>=Latex,scale=1.15]
\fill[blue!5] (-2.5,-1.57) rectangle (2.3,1.57);
\draw[blue!60!black,dashed] (-2.5,1.57)--(2.3,1.57);
\draw[blue!60!black,dashed] (-2.5,-1.57)--(2.3,-1.57);
\draw[->,gray] (-2.65,0)--(2.5,0) node[right]{$\Re z$};
\draw[->,gray] (0,-1.9)--(0,2.0) node[above]{$\Im z$};
\draw[orange!85!black,thick,fill=orange!8] (0,0) circle(1.15);
\draw[orange!85!black,->] (0,0)--(.813,.813) node[midway,below]{$r$};
\fill (0,0) circle(.03) node[below left]{$0$};
\node[anchor=west] at (2.45,1.57){$\pi/2$};
\node[anchor=west] at (2.45,-1.57){$-\pi/2$};
\node[align=left,anchor=west] at (3.6,.25){$\Re(-e^z)=-e^x\cos y<0$\\[7pt]$|1+mH_j|^2$\\[3pt]$\quad\le1-\eta_rm+B_r^2m^2$};
\end{tikzpicture}
\caption{The geometry of the exponential tangent, shown for $\lambda=1$. The real part of $-e^z$ is negative in $|\Im z|<\pi/2$. Every fixed closed disk $|z|\le r<\pi/2$ stays a positive distance from the boundary lines. The resulting uniform negative margin controls the quadratic error in $|1+mH_j|^2$ as $m\downarrow0$.}\label{fig:exponential}
\end{figure}

The positive constant $\eta_r$ in~\eqref{eq:negative-margin} is essential. A nonpositive real part alone would not control the term $m_j^2|H_j|^2$. The estimate must also be uniform on the entire closed disk to permit the transfer in Lemma~\ref{lem:transfer}.

\section{The sharp asymptotic constant}\label{sec:conclusion}
\begin{proof}[Proof of Theorem~\ref{thm:main}]
We first establish the uniform lower bound. Fix $\varepsilon>0$. If the assertion failed, there would be increasing degrees $n_j\to\infty$ and polynomials $p_j\in\mathcal P_{n_j}$ such that
\begin{equation}\label{eq:bad}
 n_j\rho(p_j)<\pi/2-\varepsilon.
\end{equation}
If $\pi/2-\varepsilon\le0$, this contradicts $\rho(p_j)\ge0$, so we may assume that the right-hand side is positive. Take $C=\pi/2$. The sequence satisfies $\rho(p_j)\le C/n_j$, and its scaled inradii lie in $[0,C]$. Passing to a subsequence, we may assume
\[
 n_j\rho(p_j)\longrightarrow L\le\pi/2-\varepsilon.
\]
Proposition~\ref{prop:first-limit} provides a further subsequence with a locally uniform limit $F$. Equations~\eqref{eq:F-zero} and~\eqref{eq:left-amplitude}, together with Proposition~\ref{prop:reflection}, show that $F$ satisfies all hypotheses of Theorem~\ref{thm:tangent}. Thus, for every $0\le r<\pi/2$, the strict sublevel set of $F$ contains a closed disk of radius $r$. Lemma~\ref{lem:transfer} gives $L\ge r$. Letting $r\uparrow\pi/2$ yields $L\ge\pi/2$, a contradiction.

It follows that, for all sufficiently large $n$, every $p\in\Pn$ satisfies $\rho(p)\ge(\pi/2-\varepsilon)/n$. Taking the infimum over all configurations gives
\begin{equation}\label{eq:lower-main}
 \liminf_{n\to\infty}nR_n\ge\pi/2.
\end{equation}
This argument proves uniformity directly and does not require an extremizing polynomial.

For the upper bound, consider $p_n(z)=z^n-1$, and put
\[
 \beta_n=\frac\pi{2n},\qquad \alpha_n=\frac\pi{2(n+1)},\qquad
 q_n=(2\sin\alpha_n)^{1/n},\qquad H_n=q_n\sin\alpha_n.
\]
For $z=re^{i\theta}$ with $r>0$, direct expansion gives
\begin{equation}\label{eq:model-polar}
 |z^n-1|<1\quad\Longleftrightarrow\quad r^n<2\cos(n\theta).
\end{equation}
We show that each inscribed disk has radius at most $H_n$.

\emph{Selecting one lobe.}
On a disk contained in $\{|z^n-1|<1\}$, we have $z\ne0$ and $\Re(z^n)>0$. Use the principal branch to define
\[
 v(z)=(z^n)^{1/n},\qquad u(z)=v(z)/z.
\]
Then $u$ is continuous, $u(z)^n=1$, and $u(z)z=v(z)$ lies in the principal lobe $|\arg v|<\beta_n$. Since the disk is connected and the set of $n$th roots of unity is finite, $u$ is constant on the disk. Multiplication by this constant rotates the entire disk into the principal lobe and preserves its radius.

\emph{The vertical width of the principal lobe.}
For $0\le\theta\le\beta_n$, let $b=\pi/2-n\theta$. The numbers $b,\theta$ lie in $[0,\pi]$, and
\[
 \frac{b+n\theta}{n+1}=\alpha_n.
\]
Concavity of sine followed by the weighted arithmetic--geometric mean inequality gives
\[
 (\sin b)^{1/(n+1)}(\sin\theta)^{n/(n+1)}
 \le\frac{\sin b+n\sin\theta}{n+1}\le\sin\alpha_n.
\]
Raising to the power $n+1$ and using $\sin b=\cos(n\theta)$ yields
\[
 \cos(n\theta)\sin^n\theta\le\sin^{n+1}\alpha_n.
\]
For a point $z=re^{i\theta}$ in the principal lobe with $\theta\ne0$, equation~\eqref{eq:model-polar}, applied to $|\theta|$, implies
\[
 |\Im z|^n=r^n\sin^n|\theta|
 <2\cos(n|\theta|)\sin^n|\theta|
 \le2\sin^{n+1}\alpha_n=H_n^n.
\]
For $\theta=0$, $\Im z=0$ and $H_n>0$. Thus the principal lobe is contained in $|\Im z|<H_n$.

If a disk $B(c,s)$ in this strip had $s>H_n$, choose $H_n<t<s$. Both $c+it$ and $c-it$ would belong to the strip, giving $\Im c+t<H_n$ and $t-\Im c<H_n$. Adding gives $t<H_n$, a contradiction. Consequently
\[
 R_n\le\rho(z^n-1)\le H_n.
\]

\emph{Taking the limit.}
We have $\alpha_n\to0$ and $n\alpha_n\to\pi/2$, so $n\sin\alpha_n\to\pi/2$. Writing $\operatorname{sinc}t=\sin t/t$ for $t\ne0$ and $\operatorname{sinc}0=1$, the identity
\[
 2\sin\alpha_n=\frac\pi{n+1}\operatorname{sinc}\alpha_n
\]
gives
\[
 \log q_n=\frac{\log\pi-\log(n+1)+
                  \log(\operatorname{sinc}\alpha_n)}n\longrightarrow0.
\]
Here $\log(n+1)/n\to0$ and $\operatorname{sinc}\alpha_n\to1$. Hence $q_n\to1$, $nH_n\to\pi/2$, and
\[
 \limsup_{n\to\infty}nR_n\le\pi/2.
\]
Together with~\eqref{eq:lower-main}, this proves~\eqref{eq:main}. The same bounds imply $n\rho(z^n-1)\to\pi/2$, which excludes any larger asymptotic universal coefficient.
\end{proof}

The order of the two limiting arguments is fixed throughout the proof. For a prescribed $r<\pi/2$, the second rescaling is carried out within the already constructed entire function $F$, and a closed disk at a finite location is selected. Only then is the degree allowed to tend to infinity to transfer this fixed disk to the original polynomials. No synchronization of the rescaling parameters or change in the sublevel threshold is needed.

The product argument supplies the correct scale. Radial stability and low-order moment decay give the local limit its reflection structure. Three-lines convexity then produces an exponential tangent with exponent at most $1$, turning the width of a half-strip into the sharp inradius coefficient $\pi/2$.

\section*{Declarations}
\noindent\textit{Author and AI contributions.}
Yujian Geng conceived and directed the project, developed an initial proof of Bloom's matched-product inequality, and introduced the reciprocal-smoothing approach, using paired regularization scales related by inversion to retain modulus information in the product estimates. He investigated small perturbations near the zeros and displacements of disk centers as mechanisms for improving inradius bounds, and provided mathematical guidance in the development and refinement of later parts of the proof. Dong Qiu carried out the human mathematical review of the manuscript and provided theoretical guidance. ChatGPT assisted with proof development, exposition, and the generation of Lean~4 code, which was subsequently rechecked using the authors' Canto Prover system. The authors are responsible for the entire mathematical content, its attribution, and the verification of all claims, including material developed with AI assistance. The baseline code derives from Kitamura's publicly released formalization~\cite{Kitamura}; its provenance and license are retained in the archive.

\medskip
\noindent\textit{Definitions and verification scope.}
The formal theorem uses the strict sublevel set and the open Euclidean balls in~\eqref{eq:definitions}; it does not replace the strict sublevel condition by a non-strict one. The internal radius supremum allows nonpositive radii, whose open balls are empty. The file \texttt{PaperDefinitionBridge.lean} proves that this convention gives exactly the positive-radius supremum in~\eqref{eq:definitions} for positive degree, and that the extremal infima agree. No attainment of either extremum is assumed. Closed disks in Lemma~\ref{lem:transfer} and Theorem~\ref{thm:tangent} are compact witnesses strictly inside the sublevel set; they do not alter the definition of inradius. The final statement in the paper's notation is \texttt{main\_positive\_radius\_stepwise} in \texttt{PaperStepDetails.lean}. The accompanying \texttt{STEP\_INDEX.md} records the individual mathematical arguments and their Lean source locations. This correspondence is a mathematical audit, not an automatic verification of English prose.

\medskip
\noindent\textit{Verification resources and reproducibility.}
The complete formalization and the accompanying manuscript are available at
\begin{center}\small\url{https://github.com/xyzjpkkk-max/erdos1039-lean}.\end{center}
The fixed code snapshot used for this paper is commit
\begin{center}\small\texttt{9ca0a0b8c79c166866214dbb519f215086e1e783}.\end{center}
The repository retains this commit under the tag \texttt{code-v1}. Its Lean project is at the root of that snapshot and in the \texttt{lean/} directory of the accompanying paper release. The Lean source files are identical. A Git bundle and SHA-256 manifest provide an additional archive of this version. The code uses Lean~4.29.1 and Mathlib~v4.29.1; the latter is pinned to commit
\begin{center}\small\texttt{5e932f97dd25535344f80f9dd8da3aab83df0fe6}.\end{center}
With Lean installed, the supplied \texttt{verify.ps1} script checks the source manifest, obtains the pinned dependencies and their cache, builds \texttt{PaperReviewAudit}, and checks the recorded axiom reports. The README also gives the individual build commands and a Python checker for the source index. The transitive axiom dependencies are restricted to \texttt{propext}, \texttt{Classical.choice}, and \texttt{Quot.sound}; \texttt{sorryAx} does not occur. Build logs and SHA-256 manifests are included. The reported build uses the Mathlib cache and does not claim a recompilation of all Mathlib sources from scratch.

\medskip
\noindent\textit{Competing interests.}
The authors declare no competing interests.

\section*{Acknowledgements}
This work was supported by the National Natural Science Foundation of China (Nos.~12571489 and~12171065) and the Guangxi Natural Science Foundation (No.~2025GXNSFAA069576).

\Needspace{12\baselineskip}

\end{document}